\newtheorem{assumption}{assumption}
\newtheorem{remark}{remark}
\title{Deep learning numerical methods for high-dimensional fully nonlinear PIDEs and coupled FBSDEs with jumps\thanks{This work was supported by grants from the National Natural Science
Foundation of China (Grant Nos. 12271367, 11771060), Science and Technology Innovation Plan Of Shanghai Science and Technology Commission (No. 20JC1414200), and sponsored by Natural Science Foundation of Shanghai, China (No. 20ZR1441200). }}
\author{Wansheng Wang\thanks{Corresponding author, Department of Mathematics, Shanghai Normal University, Shanghai, 200234, China ({\tt w.s.wang@163.com}).}
                \and Jie Wang\thanks{Department of Mathematics, Shanghai Normal University, Shanghai, 200234, China.}
                \and Jinping Li\thanks{School of Science, Hainan University, Haikou, China}
                \and Feifei Gao\thanks{Department of Mathematics, Shanghai Normal University, Shanghai, 200234, China.}
                \and Yi Fu\thanks{School of Finance and Business, Shanghai Normal University, Shanghai, 200234, China.}
                }
\begin{document}

\maketitle

\begin{abstract}
We propose a deep learning algorithm for solving high-dimensional parabolic integro-differential equations (PIDEs) and high-dimensional forward-backward stochastic differential equations with jumps (FBSDEJs), where the jump-diffusion process are derived by a Brownian motion and an independent compensated Poisson random measure. In this novel algorithm, a pair of deep neural networks for the approximations of the gradient and the integral kernel is introduced in a crucial way based on deep FBSDE method. To derive the error estimates for this deep learning algorithm, the convergence of Markovian iteration, the error bound of Euler time discretization, and the simulation error of deep learning algorithm are investigated. Two numerical examples are provided to show the efficiency of this proposed algorithm.
\end{abstract}

\begin{keywords}
 parabolic integro-differential equations, forward-backward stochastic differential equations with jumps, deep learning,  error estimates
\end{keywords}

\begin{AMS}
60H35, 65C20, 65M15, 65C30, 60H10, 65M75
\end{AMS}

\pagestyle{myheadings} \thispagestyle{plain} \markboth{W. S. WANG, J.
WANG, J. P. Li, F. F. GAO AND Y. FU}{DEEP LEARNING ALGORITHMS FOR PIDES AND FBSDEJS}

\section{Introduction} The purpose of this paper is to derive error estimates for the proposed deep learning algorithm for solving high-dimensional parabolic integro-partial differential equations (PIDEs) which can be represented by high-dimensional forward-backward stochastic differential equations with jumps (FBSDEJs), because of the generalized nonlinear Feynman-Kac formula \cite{BBP1997}.

PIDEs and FBSDEJs mathematical models have been widely employed in various applications such as stochastic optimal control \cite{PP1990,P1993,TL1994,B2006,Situ05}, mathematical finance \cite{KPQ2014,Cont03,Situ05}, and so on. The existence, uniqueness and regularity of the solution to the two classes of equations have been also examined by many researchers at about the same time (see, for example, \cite{PP1990,P1993,TL1994,B2006,Situ05,KPQ2014,R1997,Z1999,W2003}). Due to the complex solution structure, however, explicit solutions of PIDEs and FBSDEJs can seldom be found. Consequently, one usually resorts to numerical methods to solve the two kinds of equations, and a volume of work has been performed on their numerical solutions. IMEX time discretizations combined with finite difference method, finite element method, or spectral method, have been used to solve low-dimensional PIDEs (see, for example, \cite{Achdou05,Pindza14,Kadalbajoo17,Wang19,WMZ2021,Mao22}), and multistep and prediction-correction schemes have been used to low-dimensional FBSDEJs (see, for example, \cite{ZHAO16,ZHAO17,Fu2016}).

With the increase of dimensionality, the traditional grid-based numerical method is no longer suitable for high-dimensional problems, and its computational complexity will increase exponentially, resulting in the so-called ``curse of dimensionality" \cite{Bellman1957}. Therefore, the resolution of nonlinear partial differential equations (PDEs) in high dimension has always been a challenge for scientists. Recently, based on the Feyman-Kac representation of the PDEs, branch diffusion process method  and Monte Carlo method  have been studied; see, for example, \cite{Gobet05,HL2012,PNT2016,Warina18}.

In recent years, machine learning and deep learning have played a great role in many fields, such as
as image recognition, automatic driving, natural language processing and so on.
This also provides a new idea for numerical approximation of high-dimensional functions, which has attracted more and more scholars' attention, since these approximation methods can overcome the problem of ``curse of dimensionality". Still based on Feyman-Kac representation of the PDEs, some machine learning techniques (see, for example, \cite{Chan19}) have proposed to solve the high-dimensional problems. With multilevel techniques and automatic differentiation, multi-layer Picard iterative methods have been developed for handling some high-dimensional PDEs with nonlinearity (see, for example, \cite{WMAT19,MATTP18,Hutzenthler20}). Using machine learning representation of the solution, the so-called Deep Galerkin method has proposed to solve PDEs on a finite domain in \cite{JK2018}. On basis of the backward stochastic differential equation (BSDE) approach first developed in \cite{PP1990}, a neutral network method was proposed to solve high-dimensional PDEs in the pioneering papers \cite{WEM18,WA17}. The idea of this algorithm is to view the BSDE as a stochastic control problem with the gradient of the solution being the policy function, which can be approximated by a deep neural network by minimizing a global loss function. Deep learning backward dynamic programming (DBDP) methods, including DBDP1 scheme and DBDP2 scheme, in which some machine learning techniques are used to estimate simultaneously the solution and its gradient by minimizing a loss function on each time step, were proposed in \cite{HPH2020}. The DBDP1 algorithm has been extended to the case of semilinear parabolic nonlocal integro-differential equations in \cite{Castro21}. Quite recently, a new deep learning algorithm was proposed to solve fully nonlinear PDEs and nonlinear second-order backward
stochastic differential equations (2BSDE) by exploiting a connection between
PDEs and 2BSDEs \cite{Beck19}.

It is worth noting that most of the above-named approximation methods are only applicable
in the case of semilinear PIDEs or nonlinear PDEs. To the best of our knowledge, only the
papers by Gonon and Schwab \cite{Gonon21a,Gonon21b}, and Castro \cite{Castro21} are devoted to the deep learning approximations of the numerical solution of linear and semilinear PIDEs. At the moment there exists no practical algorithm for high-dimensional fully nonlinear
PIDEs in the scientific literature. Consequently, the numerical solution of high-dimensional nonlinear PIDEs remains an exceedingly difficult task and deserves further study. In this work, we propose a new
algorithm for solving fully nonlinear PIDEs and nonlinear FBSDEJs. The proposed algorithm exploits a connection between PIDEs and FBSDEJs to obtain a merged formulation of
the nonlinear PIDE and the coupled FBSDEJs, whose solution is then approximated by combining a Euler time
discretization with a Markovian iteration \cite{BZ2008} and a neural network-based deep learning procedure. The error estimates of this new FBSDE algorithm (we refer to the algorithm as FBSDE since it is based on forward-backward stochastic differential equations but not only backward stochastic differential equation) are then derived by bounding the time discretization error and deep learning error, and by showing the convergence of Markovian iteration.

The paper is organized as follows. We start by introducing the deep learning-based algorithm for FBSDEJs and related PIDEs in Section 2. In Section 3, the assumptions for theoretical analysis are made and the main error estimates are given. To prove this main results, we show the convergence of Markovian iteration, bound the time discretization error, and derive the simulation error of deep learning in Sections 4, 5, and 6, respectively. Several numerical experiments with the proposed scheme are presented in Section 7. In Section 8 we finally conclude with some remarks.

\section{Deep learning-based schemes for nonlinear PIDEs and coupled FBSDEJs} In this section, we introduce the details about deep learning-based schemes for solving coupled FBSDEJs and the associated nonlinear PIDEs. We deal with nonlinear PIDEs in three steps.
\begin{itemize}
	\item  We formulate the PIDEs as FBSDEJs.
	\item By taking ``control part'' and ``integral kernel'' as policy functions, we view FBSDEJs as a stochastic control problem.
	\item We use a deep neural network to approximate high-dimensional policy function.
\end{itemize}

\subsection{Nonlinear PIDEs and coupled FBSDEJs} Let $|\cdot|$ denote the Euclidean norm in the Euclidean space, and $C^{l,k}$ denote the set of functions $f(t,x)$ with continuous partial derivatives up to $l$ with respect to $t$ and up to $k$ with respect to $x$. Let $(\Omega,\mathcal{F}, \mathbb F,P)$, $\mathbb F=(\mathcal{F}_{t})_{0\leq t<T}$, be a stochastic basis such that $ \mathcal{F}_{0} $ contain all zero $P$-measure sets, and $ \mathcal{F}_{t^+} \triangleq \bigcap_{\epsilon > 0}\mathcal{F}_{t+\epsilon}=\mathcal{F}_{t} $. The filtration $\mathbb F$ is generated by a $d$-dimension Brownian motion (BM) $ \{ W_{t} \} _{0\leq t<T} $ and a Poisson random measure $\mu$  on $\mathbb R_+\times E$, independent of $W$. In this subsection, we establish a connection between nonlinear PIDEs and coupled FBSDEJs.

Let us consider the following nonlinear PIDEs
\begin{eqnarray}\label{eq2.1}
\left\{
\begin{aligned}
\partial_{t}u+\mathcal L u+f(t,x,u,\sigma^\mathsf{T}(t,x,u)\nabla_{x}u,B[u]) &=0, \quad \;(t,x) \in [0,T)\times \mathbb R^d,\\
u(T,x)&=g, \quad \;x\in \mathbb R^d,
\end{aligned}
\right.
\end{eqnarray}
where $d\ge 1$ and $T>0$, $g$: $\mathbb R^d \rightarrow \mathbb R$ is terminal condition, the second-order nonlocal operator $\mathcal L$ is defined as follows:
\begin{eqnarray*}
\mathcal L u&=&\frac{1}{2}{\rm Tr}(\sigma\sigma^\mathsf{T}(t,x,u)\partial_{x}^{2}u)+\left\langle b(t,x,u),\nabla_{x} u\right\rangle\\
&&+\int_{E}(u(t,x+\beta(t,x,u,e))-u(t,x)-\left\langle\nabla_{x} u,\beta(t,x,u,e)\right \rangle)\lambda(de),
\end{eqnarray*}
and $B$ is an integral operator
\begin{eqnarray*}
B[u]=\int_{E}(u(t,x+\beta(t,x,e))-u(t,x))\gamma(e)\lambda(de).
\end{eqnarray*}
Here $b(t,x,y) $: $ [0,T] \times \mathbb R^d \times \mathbb R \to \mathbb R^d$, $\sigma(t,x,y)$: $[0,T] \times \mathbb R^d \times \mathbb R \to \mathbb R^{d \times d} $, $ \beta(t,x,y,e) $: $[0,T] \times \mathbb R^d \times \mathbb R \times E \to \mathbb R^d $, and $f$: $[0,T] \times \mathbb R^d \times \mathbb R \times \mathbb R^{d} \times \mathbb R \rightarrow \mathbb R$ are deterministic and Lipschitz continuous functions of linear growth which are additionally supposed to satisfy some weak coupling or monotonicity conditions, $A^{\mathsf{T}}$ denotes the transpose of a vector or matrix $A$, $ E \triangleq \mathbb R^{d} \backslash \{0\} $ is equipped with its Borel field $\mathcal E$, with compensator $\nu(de,dt)=\lambda(de) dt$, for some measurable functions $\gamma:  E\rightarrow \mathbb R $ satisfying
\begin{eqnarray}\label{eq2.2}
\sup\limits_{e \in E}|\gamma(e)| \leq K_{\gamma},
\end{eqnarray}
and $\lambda(de)$ is assumed to be a $\sigma $-finite measure on $ (E,\mathcal E ) $ satisfying
\begin{eqnarray*}
\int_{E} (1 \land |e|^2) \lambda(de) < \infty.
\end{eqnarray*}
\\

Let $ u(t,x) \in C^{1,2}([0,T] \times \mathbb R^d) $ be the unique viscosity solution of (\ref{eq2.1}). Then by the It\^o formula, one can show that the solution $u$ admits a probabilistic representation, i.e., we have (see \cite{Ma07} and \cite{BBP1997}), 
\begin{eqnarray}\label{eq2.5}
u(t,X_{t})=Y_{t},
\end{eqnarray}
and furthermore, the following relationship holds
\begin{eqnarray}\label{eq2.6}
\left\{
\begin{aligned}
&Z_{t}=\nabla_{x}u(t,X_{t})\sigma(t,X_{t},u(t,X_t)),\\
&U_{t}=u\left(t,X_{t^{-}}+\beta(t,X_{t^{-}},u(t,X_{t^{-}}),e)\right)-u(t,X_{t^{-}})\\
&\Gamma_t=B[u(t,X_t)],
\end{aligned}
\right.
\end{eqnarray}
where the quadruplet $(X_{t},Y_{t},Z_{t},\Gamma_{t}) $ is the solution of the coupled FBSDEJs
 \begin{eqnarray}\label{eq3.15 }
 \left\{
 \begin{aligned}
 X_t &=\xi+\int_{0}^{t}b(s,X_{s},Y_{s})ds+\int_{0}^{t}\sigma(s,X_{s},Y_{s})dW_{s}+\int_{0}^{t}\int_{E}\beta(X_{s^-},Y_{s^-},e)\tilde{\mu}(de,ds),\\
 Y_{t}&=g(X_{T})+\int_{t}^{T}f(s,X_s,Y_{s},Z_{s},\Gamma_{s})ds-\int_{t}^{T}Z_{s}dW_{s}-\int_{t}^{T}\int_{E}U_{s}(e)\tilde{\mu}(de,ds),
 \end{aligned}
 \right.
  \end{eqnarray}
The quadruplet $(X_{t},Y_{t},Z_{t},\Gamma_{t}) $ are called the ``forward part'', the ``backward part'', the ``control part'' and the ``jump part'', respectively. The presence of the control part $Z_t$ is crucial to find a nonanticipative solution. The above formulas (\ref{eq2.5})-(\ref{eq2.6}) are the so-called nonlinear Feynman-Kac formulas, and such formulas indicate an interesting relationship between solutions of FBSDEJs and PIDEs.


Note that the FBSDEJs (\ref{eq3.15 }) is coupled since the coefficients $b,~\sigma$ and $\beta$ depend on $Y_t$. When the coefficients $b,~\sigma$ and $\beta$ are independent of $Y_t$, the FBSDEJs (\ref{eq3.15 }) is called decoupled and can be solved in sequence. Using nonlinear Feynman-Kac formulas (\ref{eq2.5})-(\ref{eq2.6}) and the decoupled FBSDEJs, the deep learning algorithm DBDP1 proposed in \cite{HPH2020} has been extended to the semilinear PIDEs in \cite{Castro21}.

%

\subsection{Deep neural network (DNN)} In this subsection, we give a brief introduction about Deep neural networks (DNN). DNN provides effective method to solve high-dimensional approximation problems, and it is a combination of simple functions. In the past decades, there exist several type of neutral network, including Deep feedfoward neutral network, convolutional neural network (CNN) and the recurrent neural network (RNN), et.al. Deep feedforward neutral network is the simplest neural network, but it is sufficient for most PDE problems. Since it is a class of universal neural network, we consider Deep feedforward neural network in this paper.

 Let $m_{\ell}(\ell=0,\ldots,L)$ be the number of neurous in the $\ell$th layers, $L$ is layer of neural network. The first layer is the input layer, the last layer is the output layer, and another layers are the hidden layers. A feedforward neural network can be defined as the composition
 \begin{eqnarray}\label{eq2.7}
 x \in \mathbb R^{d}\to \mathcal{N}_{L}   \circ \mathcal{N}_{L-1}\circ... \circ\mathcal{N}_{1}(x)\in \mathbb R^{d_{1}},
 \end{eqnarray}
where $d$ is the dimension of $x$ and the output dimension $ d_{1}=k, k \in \mathbb R_+$. We fix $d_{1}=1$ in this paper, and
\begin{eqnarray*}\label{eq2.8}
\left\{
\begin{aligned}
&\mathcal{N}_{0}(x)=x\in \mathbb R^{d},\\
&\mathcal{N}_{\ell}(x)=\varrho(\mathbf{w}_{\ell}\mathcal{N}_{\ell-1}(x)+\mathbf{b}_{\ell})\in \mathbb R^{m_{\ell}}, \quad {\hbox{for}} \quad 1\leq \ell \leq L-1,\\
&\mathcal{N}_{L}(x)=\mathbf{w}_{L}\mathcal{N}_{L-1}(x)+\mathbf{b}_L \in \mathbb R,
\end{aligned}
\right.
\end{eqnarray*}
where $ \mathbf{w}_{\ell}\in \mathbb R^{m_{\ell}\times m_{\ell-1}}$ and $\mathbf{b}_{\ell} \in \mathbb R^{m_{\ell}}$ denote the weight matrix and bias vector, respectively, $\varrho$ is a nonlinear activation function such as the logistic sigmoid function, the hyperbolic tangent ($\tanh$) function, the rectified linear unit (ReLU) function and other similar functions. We use the ReLU function for all the hidden layers in this paper. The final layers $\mathcal{N}_{L}(x) $ is typically linear.

 Let $\theta$ denote the parameters of the neural network:
 \begin{eqnarray*}
 	\theta:=\{\mathbf{w}^{\ell}, \mathbf{b}^{\ell}\}, \qquad{\ell=1,...,L}.
 \end{eqnarray*}
 The DNN is trained by optimizing over the parameters $\theta$ by (\ref{eq2.7}).
\subsection{Time discretization of the coupled FBSDEJs} We first need to discretize equation (\ref{eq3.15 }). We consider a partition of the time interval $[0,T]$:
\begin{eqnarray*}
\tau : 0=t_0<t_1<\cdots<t_N=T, \quad  N\in \mathbb{N}
\end{eqnarray*}
with modulus $ h=\max_{n=0,1 \cdots, N} \Delta t_{n}$, $\Delta  t_{n}=t_{n+1}-t_{n} $. Then a natural time discretization of equation (\ref{eq3.15 }) is by classical Euler scheme:
\begin{align}
X^{\pi}_{t_{n+1}}=& X^{\pi}_{t_{n}}+b(t_{n},X^{\pi}_{t_{n}},Y_{t_{n}}^{\pi})\Delta  t_{n}+\sigma(t_{n},X^{\pi}_{t_{n}},Y_{t_{n}}^{\pi}) \Delta  W_{t_{n}}\nonumber\\
&+ \int_{E}\beta(t_{n},X^{\pi}_{t_{n}^{-}},Y_{t_{n}}^{\pi},e)\tilde{\mu}(de,(t_{n},t_{n+1}]),\label{eq3.1}\\
Y_{t_{n+1}}^{\pi}=&Y_{t_{n}}^{\pi}-f(t_{n},X^{\pi}_{t_{n}},Y_{t_{n+1}}^{\pi},Z_{t_{n}}^{\pi},\Gamma_{t_{n}}^{\pi})\Delta  t_{n}+Z_{t_{n}}^{\pi}\Delta  W_{t_{n}}\nonumber\\
&+\int_{E}U_{t_{n}}^{\pi}(e)\tilde{\mu}(de,(t_{n},t_{n+1}]),\label{eq3.2}
\end{align}
where $\Delta  W_{t_{n}}=W_{t_{n+1}}-W_{t_{n}}$ and $\Gamma_{t}=\int_{E}U_{t}^{\pi}(e)\gamma(e)\lambda(e)$. Note that (\ref{eq3.2}) is an explicit discretization. For the implicit discretization, which is formulated as replacing $f(t_{n},X^{\pi}_{t_{n}},Y_{t_{n+1}}^{\pi},Z_{t_{n}}^{\pi},\Gamma_{t_{n}}^{\pi})$ with $f(t_{n},X^{\pi}_{t_{n}},Y_{t_{n}}^{\pi},Z_{t_{n}}^{\pi},\Gamma_{t_{n}}^{\pi})$, the same conclusions hold as we state in Theorem 3.1 for the explicit discretization.

\subsection{Deep learning-based approximations of coupled FBSDEJs}
We already formulate the PIDEs equivalently as FBSDEJs by nonlinear Feyman-Kac formula. 
Let the quadruplet $(X_{t}^{\pi},Y_{t}^{\pi},Z_{t}^{\pi},U_{t}^{\pi}) $ be the solution of (\ref{eq3.1})-(\ref{eq3.2}) with
\begin{eqnarray}\label{eq3.4}
Y_{t}^{\pi}=u(t,X_{t}^{\pi})
\end{eqnarray}
\\and
\begin{eqnarray}\label{eq3.5}
\left\{
\begin{aligned}
&Z_{t}^{\pi}=\sigma^\mathsf{T}(t,X_{t}^{\pi},Y_{t}^{\pi})\nabla_{x}u(t,X_{t}^{\pi}),\\
&U_{t}^{\pi}(e)=u(t,X_{t^{-}}^{\pi}+\beta(t,X_{t^{-}}^{\pi},Y_{t}^{\pi},e))-u(t,X_{t^{-}}^{\pi}),
\end{aligned}
\right.
\end{eqnarray}
where $u(t,x)$ is the solution to nonlinear PIDEs \eqref{eq2.1}. We can approximate $ Z_{t}$, $U_{t} $ by a deep learning algorithm. We employ the following formulas as the policy functions:
\begin{align}\label{eq3.6}
\sigma^\mathsf{T}(t_{n},X_{t_{n}}^{\pi},Y_{t_n}^{\pi})\nabla_{x}u(t_{n},X_{t_{n}}^\pi)\in \mathbb R^{1 \times d},\quad &x \in \mathbb R^{d}, \quad n \in {0,1,\ldots,N},\\
\label{eq3.7}
u(t_{n},X_{t_{n}^{-}}^{\pi}+\beta(t,X_{t_{n}^{-}}^{\pi},Y_{t_n}^{\pi},e))-u(t_{n},X_{t_{n}^{-}}^{\pi})\in \mathbb R, \quad &x \in \mathbb R^{d}, \quad n \in {0,1,\ldots,N}.
\end{align}
More specificity, letting $\rho \in \mathbb{N}$ be the number of parameters in the neural network and $\theta \in \mathbb{R}^{\rho} $, our goal becomes finding appropriate functions $ \mathcal{A}^{\theta,\pi}: \mathbb R^d\to \mathbb{R}$, $ \mathcal{B}^{\theta,\pi}_{t_{n}}:\mathbb{R}^d\times \mathbb R\to \mathbb{R}^{d}$, $n \in \{0,1,\cdots,N-1 \}$, and $\mathcal{C}^{\theta,\pi}_{t_{n}}(e):\mathbb{R}^d\times \mathbb R \to \mathbb{R}$, $n \in \{0,1,\cdots,N-1 \}$, such that $\mathcal A^{\theta,\pi}$, $ \mathcal{B}^{\theta,\pi}_{t_{n}}$ and $\mathcal{C}^{\theta,\pi}_{t_{n}}$ can serve as good surrogates of $Y_0$, $Z_t$ and $U_t$, respectively. For all appropriate $\theta \in \mathbb{R}^{\rho} $, we define $ \mathcal{A}^{\theta,\pi}: \mathbb R^d\to \mathbb{R}$ as suitable approximation of $u(0,\xi)$:
\begin{eqnarray}\label{eq3.10}
\mathcal{A}^{\theta,\pi} \approx u(0,\xi).
\end{eqnarray}
%
and
\begin{eqnarray}\label{eq3.11}
\mathcal{B}^{\theta,\pi}_{t_{n}} \approx\sigma^\mathsf{T}(t_{n},X_{t_{n}}^{\pi})\nabla_{x}u(t_{n},X_{t_{n}}^{\pi}),
\end{eqnarray}
 \begin{eqnarray}\label{eq3.12}
 \mathcal{C}^{\theta,\pi}_{t_{n}}(e) \approx u(t_{n},X_{t_{n}^{-}}^{\pi}+\beta(t_{n},X_{t_{n}^{-}}^{\pi},e))-u(t_{n},X_{t_{n}^{-}}^{\pi}).
 \end{eqnarray}
Combining (\ref{eq3.1}), (\ref{eq3.2}), (\ref{eq3.10}), (\ref{eq3.11}) and (\ref{eq3.12}) leads to
 \begin{eqnarray}\label{eq3.16}
 \left\{
 \begin{array}{l}
 X^{\pi}_{0}=\xi,\quad Y_{0}^{\pi}=\mathcal{A}^{\theta,\pi},\vspace{2ex}\\
 \begin{aligned}
 X^{\pi}_{t_{n+1}}=& X^{\pi}_{t_{n}}+b(t_{n},X^{\pi}_{t_{n}},Y^{\pi}_{t_{n}})\Delta  t_{n}+\sigma(t_{n},X^{\pi}_{t_{n}},Y^{\pi}_{t_{n}})\Delta  W_{t_{n}} \\&+\int_{E}\beta(X^{\pi}_{t_{n}},Y^{\pi}_{t_{n}},e)\tilde{\mu}(de,(t_{n},t_{n+1}]), \vspace{2ex}
 \end{aligned}\\
 Z_{t_{n}}^{\pi}=\mathcal{B}^{\theta,\pi}_{t_n}(X^{\pi}_{t_{n}},Y^{\pi}_{t_{n}}), \qquad
 U_{t_{n}}^{\pi}(e)=\mathcal{C}^{\theta,\pi}_{t_n}(X^{\pi}_{t_{n}},Y^{\pi}_{t_{n}},e), \vspace{2ex}\\
 \begin{aligned}
 Y_{t_{n+1}}^{\pi}=&Y_{t_{n}}^{\pi}-f\left(t_{n},X^{\pi}_{t_{n}},Y_{t_{n+1}}^{\pi},Z_{t_{n}}^{\pi},\int_{E} U_{t_{n}}^{\pi}(e)\gamma(e)\lambda(de)\right)\Delta  t_{n}\\&+ Z_{t_{n}}^{\pi}\Delta  W_{t_{n}}+\int_{E}U_{t_{n}}^{\pi}(e)\tilde{\mu}(de,(t_{n},t_{n+1}]).
 \end{aligned}
 \end{array}
 \right.
 \end{eqnarray}
 Now, we set the loss function as squared approximation error
 \begin{eqnarray}\label{eq3.14 }
  \theta   \mapsto  \inf\limits_{\theta} \mathbb{E} \left[ \left| Y^{\pi}_{T} -g(X_{T}) \right|^2 \right],
 \end{eqnarray}
 associated to the terminal condition of the FBSDEJs. We then obtain the appropriate $\theta$ by minimizing the expected loss function through
 stochastic gradient descent-type algorithms (SGD).

\section{Assumptions and main results}In this paper, we will derive the a posteriori error estimates for the deep learning algorithm proposed in (\ref{eq3.16}).
 To do this, we need to introduce some notations: $\Delta x=x_{1}-x_{2}$,  $\Delta y=y_{1}-y_{2}$, $\Delta z=z_{1}-z_{2}$, $\Delta  v=v_{1}-v_{2}$,	
and make several assumptions.
\begin{assumption}
\begin{enumerate}
	\item[\rm (i)] There exist constants $k_{1}<0$, $k_{2}<0$ and $k_{g}>0$ such that
		\begin{eqnarray*}
			\begin{aligned}	
				[b(t,x_1,y)-b(t,x_2,y)]^{\mathsf{T}}\Delta x &\leq k_1 |\Delta  x|^2,\\
[f(t,x,y_1,z,v)-f(t,x,y_2,z,v)]\Delta y &\leq k_2( |\Delta  y|^2,\\
				[g(t,x_{1})-g(t,x_{2})]\Delta  x &\geq k_{g}|G\Delta  x|^2.\\ 	
			\end{aligned}
		\end{eqnarray*}
	\item[\rm (ii)] The functions $b$, $\sigma$, $\beta$, $f$ and $g$ are uniformly Lipschitz continuous with respect to $(x, y, z,v)$. In particular, there are constants $ b_{x}$, $b_{y}$, $\sigma_{x}$, $\sigma_{y}$, $\beta_{x}$, $\beta_{y}$, $f_{x}$, $f_{z}$, $f_{\Gamma}$ and $g_{x}$ such that
      \begin{eqnarray*}
      	\begin{aligned}	
      		|b (t,x_{1},y_{1})-b(t,x_{2},y_{2})|^2   \leq & b_{x}|\Delta  x|^2 +b_{y}|\Delta  y|^2,\\
      		|\sigma (t,x_{1},y_{1})-\sigma(t,x_{2},y_{2})|^2  \leq & \sigma_{x}|\Delta  x|^2 + \sigma_{y}|\Delta  y|^2,\\
      		\left|\int_{E}\beta (t,x_{1},y_{1},e)\lambda(de)-\int_{E}\beta(t,x_{2},y_{2},e)\lambda(de)\right|^2  \leq & \beta_{x}|\Delta  x|^2 + \beta_{y}|\Delta  y|^2,\\
      		|f(t,x_{1},y_{1},z_{1},v_{1})-f(t,x_{2},y_{2},z_{2},v_{2})|^2  \leq&  f_{x}|\Delta  x|^2 + f_{y}|\Delta  y|^2 \\&+f_{z}|\Delta  z|^2 + f_{\Gamma}|\Delta  v|^2,\\
      		|g(t,x_{1})-g(t,x_{2})|^2  \leq & g_{x}|\Delta  x|^2.
      	\end{aligned}
      \end{eqnarray*}
	\item[\rm (iii)] $b(t,0,0)$, $\sigma(t,0,0)$ and $\beta(t,0,0)$ are bounded. In particular, there are constants $b_{0}$, $\sigma_{0}$, $\beta_{0}$, $f_{0}$ and $g_{0}$ such that
	\begin{eqnarray*}
	\begin{aligned}	
		|b (t,x,y)|^2  & \leq b_{0} + b_{x}|x|^2 + b_{y}|y|^2,\\
		|\sigma (t,x,y)|^2 & \leq \sigma_{0} + \sigma_{x}|x|^2 + \sigma_{y}|y|^2,\\
		\left|\int_{E}\beta (t,x,y,e)\lambda(de)\right|^2 & \leq \beta_{0} +\beta_{x}|x|^2 + \beta_{y}|y|^2,\\
		|f(t,x,y,z,v)|^2  & \leq f_{0} + f_{x}|x|^2 + f_{y}|y|^2 +f_{z}|z|^2 + f_{\Gamma}|v|^2,\\
		|g(x)|^2 & \leq g_{0}+g_{x}|x|^2.
	\end{aligned}
\end{eqnarray*}
\end{enumerate}
\end{assumption}

It should be emphasized that here $b_x$ et al. are constants, not partial derivatives. For convenience, we also suppose that $\mathbf M$ is an upper bound for all these constants above.

The following assumption will be used in bounding the time discretization error.
\begin{assumption}
The coefficients $b$, $\sigma$, $\beta$, $f$ are uniformly H$\ddot{o}$lder-$\frac{1}{2}$ continuous with respect to $t$. We also assume the same constant $\mathbf M$ to be the upper bound of the square of the H\"oder constants.
\end{assumption}

Now we state an assumption which plays a key role in error analysis of numerical methods for coupled FBSDEJs problems.	
\begin{assumption}
	One of the following five cases holds:
\begin{enumerate}
\item [\rm (i)] Small time duration, that is, $T$ is small.
\item [\rm (ii)] Weak coupling of $Y$ into the forward SDEJ (\ref{eq3.15 }), that is $b_{y}$,  $\sigma_{y}$ and $\beta_{y}$ are small. In particular, if $b_{y}=\sigma_{y}=\beta_{y}=0$, then the forward equation does not depend on the backward one.
\item [\rm (iii)] Weak coupling of $X$ into the backward SDEJ (\ref{eq3.15 }), that is, $f_{x}$ and $g_{x}$ are small. In particular, if $f_{x}=g_{x}=0$, then the backward equation does not depend on the forward one and, thus, the backward SDEJ (\ref{eq3.15 }) are also decoupled.
\item [\rm (iv)] $b$ is strongly decreasing in $x$, that is, $k_1$ is very negative.
\item [\rm (iv)] $f$ is strongly decreasing in $y$, that is, $k_2$ is very negative.
	\end{enumerate}
\end{assumption}

Finally, we make an assumption on the neural network approximation functions, which makes sure the systems in (\ref{eq3.16}) is well-known.
\begin{assumption}
Then functions	$\mathcal{A}^{\theta,\pi}_{0}$, $\mathcal{B}^{\theta,\pi}_{t_{n}}$ and $\mathcal{C}^{\theta,\pi}_{t_{n}}$  are measurable with linear growth.	
\end{assumption}

It is easy to verify that neural networks with common activation functions, including ReLU and sigmoid function, satisfy this assumption.
	
Assumption 1 is usually called the Lipschitz continuity and monotonicity conditions, and Assumption 3 is called weak coupling conditions (Bender and Zhang \cite{BZ2008}). We can give more specific expression later. With these assumptions, we will prove the following main theorem in this paper.
 \begin{theorem}[Error estimates for deep learning algorithm]\label{th5.1}
Under Assumptions 1, 2, 3, and 4, there exists a constant $C$, independent of $h$, $d$, such that for sufficiently small h, for any $\epsilon >0$, we have
 		\begin{eqnarray}\label{eq3.1a}
 	\begin{aligned}	
 		\mathop{\max}\limits_{0\leq n<N}\mathop{\sup}\limits_{t_{n}\leq t \leq t_{n+1}}&(\mathbb{E}|X_{t}-X_{t_{n}}^{\pi}|^2+\mathbb{E}|Y_{t}-Y_{t_{n}}^{\pi}|^2)
 +\sum_{n=0}^{N-1}\int_{t_{n}}^{t_{n+1}}\mathbb{E}|Z_{t}-Z_{t_{n}}^{\pi}|^2dt \\ &+\sum_{n=0}^{N-1}\int_{t_{n}}^{t_{n+1}}\mathbb{E}|\Gamma_{t}-\Gamma_{t_{n}}^{\pi}|^2dt
  \leq C[h^{1-\epsilon}+\mathbb{E}|g(X_{T}^{\pi})-Y_{T}^{\pi}|^2].
 	\end{aligned}
 \end{eqnarray}
In particular, if one of the following two conditions holds, we have $\epsilon=0$:
\begin{itemize}
\item[\rm (i)] Coefficient $b$, $\sigma$, $\beta$, $f$ and $g$ have bounded derivative function with $K$-Lipchitz derivatives.
\item[\rm (ii)] For each $y\in \mathbb R$ and $e\in E$, the map $x\in \mathbb R^d\to \beta(x,y,e)$ admits a Jacobian matrix $\nabla_{x}\beta(x,y,e)$ such that the function $a(x,y,\xi;e):=\xi^{\mathsf{T}}(\nabla_{x}\beta(x,y,e)+I_{d})\xi$ satisfies one of the following condition uniformly in $(x, y,\xi)\in{\mathbb{R}^d\times\mathbb R\times \mathbb{R}^d}$
\begin{eqnarray*}	
	a(x,y,\xi;e) \geq |\xi^2|K^{-1}~ {\hbox{or}}~a(x,y,\xi;e) \leq -|\xi^2|K^{-1},~ e\in E,~ (x, y)\in{\mathbb{R}^d\times\mathbb{R}^d}.
\end{eqnarray*}
\end{itemize}
 \end{theorem}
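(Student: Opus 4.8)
The plan is to prove \eqref{eq3.1a} by splitting the global error into three pieces, each handled separately in Sections 4--6, and then recombining them with the triangle inequality. Writing $(X_t,Y_t,Z_t,\Gamma_t)$ for the solution of the continuous coupled FBSDEJ \eqref{eq3.15 } and $(X^\pi_{t_n},Y^\pi_{t_n},Z^\pi_{t_n},\Gamma^\pi_{t_n})$ for the deep-learning iterate defined by \eqref{eq3.16}, I would interpolate two intermediate objects between them: first, the Markovian (Picard) iterates $(X^{m},Y^{m},Z^{m},\Gamma^{m})$ that decouple the forward and backward equations by freezing $Y$ in the coefficients at the previous iteration; second, the exact Euler discretization of those iterates, in which the conditional expectations are computed exactly rather than replaced by the networks $\mathcal{A}^{\theta,\pi}$, $\mathcal{B}^{\theta,\pi}_{t_n}$, $\mathcal{C}^{\theta,\pi}_{t_n}$. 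Each arrow in the chain ``continuous solution $\to$ Markovian iterate $\to$ Euler discretization $\to$ network iterate'' then contributes exactly one of the three error terms, and the total bound is their sum.

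First I would invoke the contraction established in Section 4: under the weak-coupling/monotonicity hypothesis (Assumption 3), the map sending one Picard iterate to the next is a strict contraction in the natural $L^2$ norm on $(X,Y,Z,\Gamma)$, with modulus strictly less than $1$ governed by $b_y,\sigma_y,\beta_y,f_x,g_x$, $k_1$, $k_2$ and $T$. Hence the iterates converge geometrically to $(X,Y,Z,\Gamma)$, and the truncation error of the iteration is absorbed into the constant $C$. Next I would apply the Euler bound of Section 5: for the decoupled iterates the classical Euler--Maruyama analysis for BSDEs with jumps, combined with the H\"older-$\tfrac12$ regularity in $t$ (Assumption 2) and the $L^2$ path regularity of $Z$ and $\Gamma$, yields an error of order $h^{1-\epsilon}$ for every $\epsilon>0$. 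The loss of $\epsilon$ is precisely the gap between the generic $L^2$-modulus-of-continuity estimate for $Z$ and the sharp rate; under condition (i) (bounded, Lipschitz coefficient derivatives) or the nondegeneracy-type condition (ii) on $\xi^{\mathsf{T}}(\nabla_x\beta+I_d)\xi$, a Malliavin/representation argument upgrades the path regularity to the full order $h$, giving $\epsilon=0$.

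The last and most delicate step is the deep-learning simulation error of Section 6, which is what produces the \emph{a posteriori} term $\mathbb{E}|g(X^\pi_T)-Y^\pi_T|^2$. Here I would run a backward-in-time stability argument comparing the exact discretization with the network-generated trajectory: starting from the terminal residual $g(X^\pi_T)-Y^\pi_T$, one propagates the error through \eqref{eq3.2} using It\^o isometry for the Brownian and compensated-Poisson martingale parts, the Lipschitz bounds of Assumption 1(ii) on $f,\sigma,\beta$, and the monotonicity constants $k_1,k_2$ to control the coupling cross-terms, closing the recursion by a discrete Gronwall inequality. This shows that the accumulated $L^2$ errors in $Y^\pi$, $Z^\pi$ and $\Gamma^\pi$ are dominated by a constant multiple of the terminal loss plus the discretization terms already bounded. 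Summing the three contributions and taking the supremum over each $[t_n,t_{n+1}]$---where the continuous-time fluctuation of $X$ and $Y$ between grid points is again $O(h)$ by the moment bounds of Assumption 1(iii) and Assumption 2---yields \eqref{eq3.1a}.

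The hard part will be the full coupling of \eqref{eq3.15 }: since $b,\sigma,\beta$ depend on $Y$, the forward dynamics cannot be analysed independently of the backward equation, so the forward and backward $L^2$ errors feed into one another and a naive Gronwall argument does not close. The weak-coupling/monotonicity Assumption 3 is precisely the device that makes both the Markovian contraction and the stability recursion contractive enough to absorb these cross-terms; the bulk of the technical work will be verifying that its quantitative form survives the jump contributions---that is, that the extra terms carrying $\beta_y$, $\Gamma$ and the kernel approximation $\mathcal{C}^{\theta,\pi}$ do not destroy the contraction modulus.
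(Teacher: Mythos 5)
Your proposal follows essentially the same route as the paper: the global error is split into a time-discretization part of order $h^{1-\epsilon}$ (Theorem 5.2, which rests on the Markovian-iteration analysis of Section 4 to control the Lipschitz constant and linear growth of the discrete decoupling field, and on the intermediate scheme driven by the exact decoupling function) and a deep-learning simulation part bounded by the terminal loss $\mathbb{E}|g(X_T^\pi)-Y_T^\pi|^2$ via a backward stability recursion and discrete Gronwall inequality (Theorem 6.2), the two being combined by the triangle inequality exactly as you describe. The only cosmetic difference is that the paper uses the Markovian iteration as a device to establish regularity of the decoupling field and solvability of the coupled discrete system rather than as a literal intermediate term contributing its own error, but this does not change the substance of the argument.
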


Theorem \ref{th5.1} allows us to state that the simulation error (left side of equation (\ref{eq3.1a})) of deep learning FBSDE method can be bounded through the value of the objective function (\ref{eq3.14 }) and the time discretization error $h^{1-\epsilon}$. It is illuminating to note that for $\beta=0$, that is, the no-jump case, it has been shown that $\epsilon=0$ for the deep BSDE method in \cite{HJ2020}. Note also that the appearance of $\epsilon$ is mainly due to the error $\sum_{n=0}^{N-1}\int_{t_{n}}^{t_{n+1}}\mathbb{E}|Z_{t}-Z_{t_{n}}^{\pi}|^2dt$. Under some additional conditions such as (i), (ii) in Theorem \ref{th5.1} or similar conditions (see, e.g., \cite{Buckdahn94}), we can prove that the estimate (\ref{eq3.1a}) holds true for $\epsilon=0$.

The constant $C$ in Theorem \ref{th5.1} depends on the constants in Assumptions 1, 2, 3, and 4, related to the coefficients $b$, $\sigma$, $\beta$, $f$, $g$, and $T$, but is independent of $d$, $h$, and $N$. As will be seen in the proof, roughly speaking, the weaker the coupling (resp., the stronger the monotonicity, the smaller the time horizon) is, the easier the condition is satisfied, and the smaller the constants $C$ related with error estimates are.

In what follows, we concentrate on the proof of Theorem \ref{th5.1}, which is divided in
the three sections. The next section will be of great utility in order to prove the convergence of Markovian iteration.

From now on we denote by $C$ a generic constant that only depends on $\mathbb E|\xi|^2$, $T$ and the coefficients $b$, $\sigma$, $\beta$, $f$, $g$, but is independent of $d$, $h$, $N$ and $x$. The value of $C$ may change from line to line when there is no need to distinguish.

\section{Convergence of Markovian iteration}
To prove Theorem \ref{th5.1}, we need several theorems and lemmas. Recalling the discrete form (\ref{eq3.16}) and taking conditional expectations  $\mathbb{ E}_{t_{n}}\{ \cdot \} $ on both sides of the last equation in (\ref{eq3.16}), we obtain
	\begin{eqnarray}\label{eq5.1}
	\hat{Y}^{\pi}_{t_{n}}=\mathbb{E}_{t_{n}}\{\hat{Y}^{\pi}_{t_{n+1}}+f(t_{n}, \hat{X}^{\pi}_{t_{n}}, \hat{Y}^{\pi}_{t_{n}}, \hat{Z}^{\pi}_{t_{n}}, \hat{\Gamma}^{\pi}_{t_{n}})\}.
	\end{eqnarray}
Multiplying $(\Delta  W_{t_{n}})^\mathsf{T}$ and taking condition expectations  $ \mathbb{E}_{t_{n}}\{ \cdot \} $ on both sides of  the last equation in (\ref{eq3.16}) again, one gets
\begin{eqnarray}\label{eq5.2}
\hat{Z}^{\pi}_{t_{n}}=\frac{1}{\Delta  t_{n}}\mathbb{E}_{t_{n}}\{\hat{Y}^{\pi}_{t_{n+1}}\Delta  W_{t_{n}}\}.
\end{eqnarray}
Similarly, multiplying $\int_{t_{n}}^{t_{n+1}}\int_{E}\gamma(e)\tilde{\mu}(de,dt)$ and taking condition expectations  $ \mathbb{E}_{t_{n}}\{ \cdot \} $ on both sides of  the last equation in (\ref{eq3.16}) again yield
\begin{eqnarray}\label{eq5.3}
\hat{\Gamma}^{\pi}_{t_{n}}=\frac{1}{\Delta  t_{n}}\mathbb{E}_{t_{n}}\left\{\hat{Y}^{\pi}_{t_{n+1}}\int_{E}\gamma(e)\tilde{\mu}(de, (t_{n}, t_{n+1}])\right\}.
\end{eqnarray}
From (\ref{eq3.16}) and (\ref{eq5.1})-(\ref{eq5.3}), we can get another discrete system without deep learning as follows:
 \begin{eqnarray}\label{eq5.4}
\left\{
\begin{array}{l}
\hat{X}^{\pi}_{0}=\xi,\vspace{2ex}\\
\begin{aligned}
\hat{X}^{\pi}_{t_{n+1}}=& \hat{X}^{\pi}_{t_{n}}+b(t_{n},\hat{X}^{\pi}_{t_{n}},\hat{Y}^{\pi}_{t_{n}})\Delta t_{n}+\sigma(t_{n},\hat{X}^{\pi}_{t_{n}},\hat{Y}^{\pi}_{t_{n}}) \Delta W_{t_{n}}  \\&+\int_{E}\beta(\hat{X}^{\pi}_{t_{n}},\hat{Y}^{\pi}_{t_{n}},e)\tilde{\mu}(de,(t_{n},t_{n+1}]),
\end{aligned}\\
\hat{Y}^{\pi}_{T}=g(\hat{X}^{\pi}_{T}),\qquad
\hat{Z}_{t_{n+1}}^{\pi}=\frac{1}{\Delta  t_{n}}\mathbb{E}_{t_{n}}\{\hat{Y}^{\pi}_{t_{n+1}}\Delta  W_{t_{n}}\},\vspace{2ex}\\
\hat{\Gamma}^{\pi}_{t_{n}}=\frac{1}{\Delta  t_{n}}\mathbb{E}_{t_{n}}\left\{\hat{Y}^{\pi}_{t_{n+1}}\int_{E}\gamma(e)\tilde{\mu}(de,(t_{n},t_{n+1}])\right\},\vspace{2ex}\\
\hat{Y}_{t_{n}}^{\pi}=\mathbb{E}_{t_{n}}\{\hat{Y}^{\pi}_{t_{n+1}}+f(t_{n}, \hat{X}^{\pi}_{t_{n}}, \hat{Y}^{\pi}_{t_{n+1}}, \hat{Z}^{\pi}_{t_{n}}, \hat{\Gamma}^{\pi}_{t_{n}})\}.
\end{array}
\right.
\end{eqnarray}
On the solution $(\hat{X}^{\pi}_{t_{n}}$, $\hat{Y}^{\pi}_{t_{n}}$, $\hat{Z}^{\pi}_{t_{n}}$, $\hat{\Gamma}^{\pi}_{t_{n}})$ of (\ref{eq5.4}), we have the following theorem whose proof will be given later.

According to \cite{Z1999} and \cite{BBP1997}, we know that under Assumption 1, the solution $(\hat{X}^{\pi}_{t_{n}}$, $\hat{Y}^{\pi}_{t_{n}}$, $\hat{Z}^{\pi}_{t_{n}}$, $\hat{\Gamma}^{\pi}_{t_{n}})$ to FBSDEJs (\ref{eq3.15 }) is unique. Although the algorithm (\ref{eq5.4}) is explicit with respect to $\hat{X}^{\pi}_{t_{n+1}}$ and $\hat{Y}_{t_{n}}^{\pi}$, it cannot be implemented directly because of their coupling. To decouple the equations (\ref{eq5.4}) in practical computation, we can introduce a ``Markovian" iteration \cite{BZ2008} with $ u_{t_n,0}^{\pi}=0$, $\forall 0\le n\ge N$,
\begin{eqnarray}\label{eq5.6}
\left\{
\begin{array}{l}
\hat{X}^{\pi}_{0,m}=\xi,\vspace{1ex}\\
\begin{aligned}
\hat{X}^{\pi}_{t_{n+1},m}= &\hat{X}^{\pi}_{t_{n},m}+b(t_{n},\hat{X}^{\pi}_{t_{n},m},u_{t_{n},m-1}^{\pi}(\hat{X}^{\pi}_{t_{n},m}))\Delta t_{n}\\&+\sigma(t_{n},\hat{X}^{\pi}_{t_{n},m},u_{t_{n},m-1}^{\pi}(\hat{X}^{\pi}_{t_{n},m})) \Delta W_{t_{n}} \\&+\int_{E}\beta(\hat{X}^{\pi}_{t_{n},m},u_{t_{n},m-1}^{\pi}(\hat{X}^{\pi}_{t_{n},m}),e)\tilde{\mu}(de,(t_{n},t_{n+1}]),\vspace{1ex}
\end{aligned}\\
\hat{Y}^{\pi}_{T,m}=g(\hat{X}^{\pi}_{T,m}),\qquad \hat{Z}_{t_{n+1},m}^{\pi}=\frac{1}{\Delta  t_{n}}\mathbb{E}_{t_{n}}\{\hat{Y}^{\pi}_{t_{n+1},m}\Delta  W_{t_{n}}\},\qquad m=1,2,\dotsc,\\
\hat{\Gamma}^{\pi}_{t_{n},m}=\frac{1}{\Delta  t_{n}}\mathbb{E}_{t_{n}}\left\{\hat{Y}^{\pi}_{t_{n+1},m}\int_{E}\gamma(e)\tilde{\mu}(de,(t_{n},t_{n+1}])\right\},\vspace{1ex}\\
\hat{Y}_{t_{n},m}^{\pi}=\mathbb{E}_{t_{n}}\{\hat{Y}^{\pi}_{t_{n+1},m}+f(t_{n}, \hat{X}^{\pi}_{t_{n},m}, \hat{Y}^{\pi}_{t_{n+1},m}, \hat{Z}^{\pi}_{t_{n},m}, \hat{\Gamma}^{\pi}_{t_{n},m})\},\vspace{1ex}\\
u_{t_{n},m}^{\pi}(\hat{X}^{\pi}_{t_{n},m})=\hat{Y}_{t_{n},m}^{\pi}.
\end{array}
\right.
\end{eqnarray}
For proving the convergence of the ``Markovian" iteration (\ref{eq5.6}), we estimate  $u_{t_{n},m+1}^{\pi}(\hat{X}^{\pi}_{t_{n},m}) - u_{t_{n},m}^{\pi}(\hat{X}^{\pi}_{t_{n},m})$ in terms of $\hat{X}^{\pi}_{t_{n},m+1}-\hat{X}^{\pi}_{t_{n},m}$, and then  $\hat{X}^{\pi}_{t_{n},m+1}-\hat{X}^{\pi}_{t_{n},m}$ in terms of $u_{t_{n},m}^{\pi}(\hat{X}^{\pi}_{t_{n},m})-u_{t_{n},m-1}^{\pi}(\hat{X}^{\pi}_{t_{n},m})$, and obtain
\begin{eqnarray*}
	\begin{aligned}
		\mathop{\sup}\limits_{x} |u_{t_{n},m+1}^{\pi}(x)-&u_{t_{n},m}^{\pi}(x)|^2\leq	\nu(L(u_{t_{n},m}^{\pi})) \mathop{\sup}\limits_{x}|u_ {t_{n},m}^{\pi}(x)-u_{t_{n},m-1}^{\pi}(x)|^2,
	\end{aligned}
\end{eqnarray*}
where $\nu(L(u_{t_{n},m}^{\pi}))$ depends on the coefficients of the equation and the Lipschitz constant $L(u^\pi_{t_{n},m})$ of $u^\pi_{t_{n},m}$. $L(u^\pi_{t_{n},m})$ will be estimated in following lemma, Lemma \ref{le5.3}.
If the ``Markovian" iteration converges, we need to control $\nu(L(u_{t_{n},m}^{\pi}))< 1$. In addition, we will show that $u^\pi_{t_{n},m}$ is linearly growing and satisfies
\begin{eqnarray}\label{eq5.7a}
	|u_{t_{n},m}^{\pi}(x)|^2\leq G(u_{t_{n},m}^{\pi})|x|^{2}+H(u_{t_{n},m}^{\pi}).
\end{eqnarray}
To estimate the Lipschitz constant $L(u^\pi_{t_{n},m})$ and the constants $G(u_{t_{n},m}^{\pi})$ and $H(u_{t_{n},m}^{\pi})$ in the linear growth condition (\ref{eq5.7a}), let us define
\begin{eqnarray*}
		\begin{aligned}
			L_{0}  =& [b_{y}+\sigma_{y}+\beta_{y}][g_{x}+f_{x}T]T \exp([2+2k_1+2k_2+\sigma_{x}+\beta_{x}+f_{z}+K^2_{\gamma}f_{\Gamma}]T\\&+[b_{y}+\sigma_{y}+\beta_{y}][g_{x}+f_{x}T]T),\\
			L_{1}  = &[g_{x}+f_{x}T] [\exp([2+2k_1+2k_2+\sigma_{x}+\beta_{x}+f_{z}+K^2_{\gamma}f_{\Gamma}]T\\&+[b_{y}+\sigma_{y}+\beta_{y}][g_{x}+f_{x}T]T+1) \vee 1],\\
c_{0}(L_{1}):=&T[g_{x}\Upsilon_{1}([1+2k_2+f_{z}+K^2_{\gamma}f_{\Gamma}]T,[1+2k_1+\sigma_{x}+\beta_{x}]T+[b_{y}+\sigma_{y}+\beta_{y}]L_{1}T)\\
&+f_{x}T\Upsilon_{0}([1+2k_2+f_{z}+K^2_{\gamma}f_{\Gamma}]T)\\
&\times\Upsilon_{0}([1+2k_1+\sigma_{x}+\beta_{x}]T+[b_{y}+\sigma_{y}+\beta_{y}]L_{1}T)],\vspace{2ex}\\
	c_{1}(L_{1}):=&[b_{y}+\sigma_{y}+\beta_{y}]c_{0}(L_{1}),\vspace{2ex}\\	L_{2}(L_{1}):=&[b_{0}+\sigma_{0}+\beta_{0}]c_{0}(L_{1})+e^{[1+2k_2+f_{z}+K^2_{\gamma}f_{\Gamma}]^{+}T}g_{0}\\
&+f_{0}T\Upsilon_{0}([1+2k_2+f_{z}+K^2_{\gamma}f_{\Gamma}]T),\\
	\Upsilon_{0}(x):=&\frac{e^{x}-1}{x},~~(x>0),\qquad	\Upsilon_{1}(x,y):=\mathop{\sup}\limits_{0<\theta<1}\theta e^{\theta x}\Upsilon_{0}(\theta y).
\end{aligned}
	\end{eqnarray*}

Then we have the following lemma.
\begin{lemma}\label{le5.3}
	If
	\begin{eqnarray} \label{eq5.7}
		L_{0}\leq e^{-1}, \quad c_{1}(L_{1})<c_{1}<1,
	\end{eqnarray}
then for any $\bar{L}>L_{1}$, $\bar{G}>L_{1}$, $L_{2}>L_{2}(L_{1})$, and for $h$ small enough, we have
		\begin{eqnarray*}
		L(u_{t_{n},m}^{\pi})\leq \bar{L}, \quad	G(u_{t_{n},m}^{\pi})	\leq \bar{G}, \quad H(u_{t_{n},m}^{\pi})\leq  \frac{L_{2}}{1-c_{1}}.
	\end{eqnarray*}	
\end{lemma}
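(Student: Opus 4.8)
The plan is to establish the three bounds simultaneously by induction on the Markovian iteration index $m$ in (\ref{eq5.6}), reading each iterate function $u^\pi_{t_n,m}$ off the forward--backward system through the identity $u^\pi_{t_n,m}(\hat X^\pi_{t_n,m})=\hat Y^\pi_{t_n,m}$. The base case $m=0$ is immediate since $u^\pi_{t_n,0}\equiv0$, so that all three quantities vanish. For the inductive step I assume $L(u^\pi_{t_n,m-1})\le\bar L$, $G(u^\pi_{t_n,m-1})\le\bar G$ and $H(u^\pi_{t_n,m-1})\le L_2/(1-c_1)$ for every $n$, and recover the same bounds for the $m$-th iterate. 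The decisive structural point is that the forward dynamics of the $m$-th iteration use the \emph{previous} function $u^\pi_{t_n,m-1}$ to supply the $Y$-dependence of $b$, $\sigma$, $\beta$; its Lipschitz constant and growth constants therefore enter as data in the one-step estimates, which is exactly how the coupling factor $[b_y+\sigma_y+\beta_y]$ and the nested constant $L_1$ come to appear inside $c_0(L_1)$ and in the exponents defining $L_0$ and $L_1$.

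For the Lipschitz constant I would fix a time $t_n$, start the coupled system (\ref{eq5.6}) from two initial states $x_1,x_2$ driven by the same Brownian motion and compensated Poisson measure $\tilde{\mu}$, and compare the two flows. Within each iteration I run a backward induction over $n$, from the terminal data $g$ at $t_N$ down to $t_0$. The forward difference is controlled by a discrete Gronwall estimate: the monotonicity of $b$ in $x$ (constant $k_1$ of Assumption 1(i)) supplies the dissipative term $2k_1$, the Lipschitz bounds of Assumption 1(ii) on $\sigma,\beta$ contribute $\sigma_x+\beta_x$, the coupling contributes $[b_y+\sigma_y+\beta_y]L(u^\pi_{\cdot,m-1})$, and the jump terms are absorbed by the It\^o isometry for $\tilde{\mu}$. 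Propagating this sensitivity back through the $Y$-recursion, the monotonicity of $f$ in $y$ (constant $k_2$) gives $2k_2$ and the Lipschitz constants of $f$ together with (\ref{eq2.2}) give $f_z+K_\gamma^2 f_\Gamma$. Summing $\Delta t_n$ over the backward recursion yields, in the limit $h\to0$, the functions $\Upsilon_0$ and $\Upsilon_1$, and produces a one-step recursion $L(u^\pi_{\cdot,m})\le\Phi(L(u^\pi_{\cdot,m-1}))$ with $\Phi$ of exponential type, essentially $\Phi(\ell)=[g_x+f_xT]\exp([2+2k_1+2k_2+\sigma_x+\beta_x+f_z+K_\gamma^2 f_\Gamma]T+[b_y+\sigma_y+\beta_y]\ell T)$.

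The induction then closes by a self-mapping argument for $\Phi$. The hypothesis $L_0\le e^{-1}$ is of Lambert-$W$ type: it is precisely the threshold guaranteeing that the exponential fixed-point equation $\ell=\Phi(\ell)$ admits a finite solution dominated by $L_1$, and that $\Phi$ maps $[0,\bar L]$ into itself whenever $\bar L>L_1$; hence $L(u^\pi_{t_n,m})\le\bar L$ for all $m$ and $n$. The growth bounds are handled in the same spirit but using Assumption 1(iii) in place of 1(ii): the homogeneous (coefficient of $|x|^2$) part obeys a contraction recursion with the \emph{same} threshold $L_1$, giving $G(u^\pi_{t_n,m})\le\bar G$ for any $\bar G>L_1$, while the inhomogeneous part obeys an affine recursion $H(u^\pi_{\cdot,m})\le L_2(L_1)+c_1(L_1)H(u^\pi_{\cdot,m-1})$ whose contraction factor $c_1(L_1)=[b_y+\sigma_y+\beta_y]c_0(L_1)$ satisfies $c_1(L_1)<c_1<1$; summing the geometric series gives $H(u^\pi_{t_n,m})\le L_2(L_1)/(1-c_1)\le L_2/(1-c_1)$.

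I expect the main obstacle to be the precise bookkeeping of constants rather than any single estimate. One must check that the dissipative terms $2k_1,2k_2$ (negative by Assumption 1(i)), the jump contributions entering through the compensated measure, and the $O(h)$ corrections coming from the Euler/Markovian discretization assemble into exactly the exponents in $L_0,L_1$ and into $c_0(L_1),c_1(L_1),L_2(L_1)$; this is where ``$h$ small enough'' is used, since the discrete recursion must converge to its continuous counterpart without the $O(h)$ perturbations destroying the self-mapping property of $\Phi$ or pushing $c_1(L_1)$ above the chosen $c_1$. Verifying that $L_0\le e^{-1}$ is genuinely the operative solvability threshold, and that the inner backward induction over $n$ meshes correctly with the outer induction over $m$, is the delicate part of the argument.
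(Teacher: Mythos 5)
Your proposal is correct and follows essentially the same route as the paper: the paper proves the one-step sensitivity estimates in Lemmas \ref{le5.4} and \ref{le5.5}, runs the backward discrete Gronwall in $n$ to get the exponential recursion $L(u^\pi_{t_n,m})\le[g_x+A_6T]\exp(\cdots+(A_2+A_2A_5h)TL(u^\pi_{t_n,m-1}))\vee 1$, closes it with the normalized fixed-point argument under $L_0\le e^{-1}$ (your Lambert-$W$ observation), and handles $H$ by the same affine contraction with factor $c_1(L_1)<c_1<1$, with the specific choices \eqref{eq5.16} of the $\lambda_i$ and the limit $h\to0$ supplying the ``$h$ small enough'' clause. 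Your phrasing of the Lipschitz estimate as comparing two flows from $x_1,x_2$ under the same noise is in fact a cleaner statement of what the paper's notation is doing.
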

  \begin{remark}
  If any of the five conditions of Assumption 3 holds true, then \eqref{eq5.7} hold.
  \end{remark}

\subsection{Estimates for the difference of solutions} In order to prove Lemma \ref{le5.3}, we consider the following system of equations
 \begin{align}\label{eq5.8}
 \hat{X}^{\pi}_{t_{n+1}}=& \hat{X}^{\pi}_{t_{n}}+b(t_{n}, \hat{X}^{\pi}_{t_{n}}, \varphi(\hat{X}^{\pi}_{t_n}))\Delta t_{n}+\sigma(t_{n},\hat{X}^{\pi}_{t_{n}},\varphi(\hat{X}^{\pi}_{t_n})) \Delta W_{t_{n}} \vspace{2ex} \nonumber\\ &+\int_{E}\beta(t_{n},\hat{X}^{\pi}_{t_{n}},\varphi(\hat{X}^{\pi}_{t_n}),e)\tilde{\mu}(de,(t_{n},t_{n+1}]),\\
 \label{eq5.9}	
  \hat{Y}_{t_{n+1}}^{\pi}=&\hat{Y}_{t_{n}}^{\pi}-f(t_{n},\hat{X}^{\pi}_{t_{n}},\hat{Y}_{t_{n}}^{\pi},\hat{Z}_{t_{n}}^{\pi},\hat{\Gamma}_{t_n}^{\pi})\Delta t_{n}+ \int_{t_{n}}^{t_{n+1}} Z_{t_{n}}^{i}d W_{t}\nonumber\\&+\int_{t_{n}}^{t_{n+1}}\int_{E}U_{t_{n}}^{i}(e)\tilde{\mu}(de,dt),
 \end{align}
with $\hat{X}^{\pi}_{t_{0}}=\xi$, and
 		\begin{eqnarray*}
 		\begin{aligned}	
 			\hat{Z}_{t_{n}}^{\pi}&:=\frac{1}{\Delta  t_{n}}\mathbb{E}_{t_{n}}\{\hat{Y}^{\pi}_{t_{n+1}}\Delta  W_{t_{n}}\},\quad
 			\hat{\Gamma}^{\pi}_{t_{n}}:=\frac{1}{\Delta  t_{n}}\mathbb{E}_{t_{n}}\left\{\hat{Y}^{\pi}_{t_{n+1}}\int_{E}\gamma(e)\tilde{\mu}(de,(t_{n},t_{n+1}])\right\},
 		\end{aligned}
 	\end{eqnarray*}
 where $ \varphi $ is uniformly Lipschitz continuous with $ L(\varphi) $ denoting the square of the Lipschitz constant of $ \varphi$.

Since the terminal condition of $\hat{Y}_{T}^{\pi}$ is not specified, the system of (\ref{eq5.8})-(\ref{eq5.9}) has infinitely many solutions. The difference between two such solutions is bounded by the following lemma.
 \begin{lemma}[Estimates for the difference of two solutions]\label{le5.4} 	
For $i=1,~2$, let $(\hat{X}^{\pi,i}_{t_{n}}$, $\hat{Y}^{\pi,i}_{t_{n}}$, $\hat{Z}^{\pi,i}_{t_{n}}$, $\hat{\Gamma}^{\pi,i}_{t_{n}})$	be two solutions of \eqref{eq5.8}-\eqref{eq5.9}, with $\varphi$ replacing by $ \varphi^{i} $ and $\hat{X}^{\pi,i}_{t_{n}}$, $\hat{Y}^{\pi,i}_{t_{n}}\in L^2(\Omega,\mathcal F_{t_n},P)$. Suppose condition \eqref{eq2.2} holds. For sufficiently small $h$, denote
 	\begin{align*} 			
 			A_{1}:=&2k_1+\sigma_{x}+\lambda_1+b_{x}h+\beta_{x}, \\
 			A_{2}:=&\lambda_{1}^{-1}b_{y}+\sigma_{y}+\beta_{y}+b_{y}h,\\ 			A_{3}:=&\lambda_{2}+\lambda_{4}+(1+\lambda^{-1}_{2}+\frac{1}{K_{\gamma}^{2}}\int_{E}\gamma^2(e)\lambda(de)\lambda^{-1}_{3})f_{z}\Delta  t_{n},\\ 			A_{4}:=&\lambda_{3}+K_{\gamma}^{2}\lambda_{5}+(1+\lambda^{-1}_{2}+\frac{1}{K_{\gamma}^{2}}\int_{E}\gamma^2(e)\lambda(de)\lambda^{-1}_{3})f_{\Gamma}\Delta  t_{n},\\ 			A_{5}:=&1+2k_2+\lambda_{4}^{-1}f_{z}+\lambda_{5}^{-1}f_{\Gamma}
 +(1+\lambda^{-1}_{2}+\frac{1}{K_{\gamma}^{2}}\int_{E}\gamma^2(e)\lambda(de)\lambda^{-1}_{3})f_{y}h,\\	
 A_{6}:=&f_{x}+\left(1+\lambda^{-1}_{2}+\frac{\lambda^{-1}_{3}}{K_{\gamma}^{2}}\int_{E}\gamma^2(e)\lambda(de)\right)f_{x}h. 	
 	 	\end{align*}
where $\lambda_i>0$, $i=1,\dotsc,5$, are chosen such that
\begin{align}\label{eq4.10}
A_3\le 1 \quad {\hbox{and}}\quad A_4\le 1.
\end{align}
Then for $0 \leq n \leq N$ and any $\lambda_{6}>0$, we have
  \begin{align} \label{eq5.10}
  \mathbb{E}_{t_{n}}{| \Delta  \hat{X}_{n+1}|^2}\leq&[1+A_{1}h+(1+\lambda_{6})A_{2}hL(\varphi^{1})]| \Delta \hat{X}_{n}|^2 \vspace{2ex}\nonumber\\&+(1+\lambda_{6}^{-1})A_{2}h |\varphi^{1}(\hat{X}^{\pi,2}_{n})-\varphi^{2}(\hat{X}^{\pi,2}_{n})|^2,\\
  \label{eq5.11}
  |\Delta  \hat{Y}_{n}|^2+(1-A_{3})\Delta t_{n}|\Delta \hat{Z}_{n}|^2+&\frac{1}{K_{\gamma}^{2}}(1-A_{4})\Delta t_{n}| \Delta \hat{\Gamma}_{n}|^2\nonumber\\ \leq& (1+A_{5}h)\mathbb{E}_{t_{n}}{|\Delta  \hat{Y}_{n+1}|^2}+A_{6}h| \Delta \hat{X}_{n}|^2,
  \end{align}
  where
  \begin{eqnarray*}
  	\begin{aligned}
  		\Delta \hat{X}_{n}:&= \hat{X}^{\pi,1}_{t_n} - \hat{X}^{\pi,2}_{t_n}, \quad
  		\Delta  \hat{Y}_{n}:= \hat{Y}^{\pi,1}_{t_n} -\hat{Y}^{\pi,2}_{t_n},\\
  		\Delta \hat{Z}_{n}:&=\hat{Z}^{\pi,1}_{t_n}-\hat{Z}^{\pi,2}_{t_n},\quad
  		\Delta \hat{\Gamma}_{n}:=\hat{\Gamma}^{\pi,1}_{t_n}-\hat{\Gamma}^{\pi,2}_{t_n}.
  	\end{aligned}
  \end{eqnarray*}
 \end{lemma}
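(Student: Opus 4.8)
The plan is to prove the two one-step estimates \eqref{eq5.10} and \eqref{eq5.11} separately, each by an It\^o-type (discrete) energy computation applied to the difference of the two solutions. Since both solutions satisfy \eqref{eq5.8}--\eqref{eq5.9} with the \emph{same} driving Brownian increment $\Delta W_{t_n}$ and the same compensated Poisson measure $\tilde\mu(de,(t_n,t_{n+1}])$, the stochastic integrals cancel upon subtraction and only the drift-type terms survive. The key devices are: first, the It\^o isometry for the compensated Poisson integral, $\mathbb{E}_{t_n}\bigl|\int_{t_n}^{t_{n+1}}\!\int_E U\,\tilde\mu(de,dt)\bigr|^2=\Delta t_n\int_E\mathbb{E}_{t_n}|U|^2\lambda(de)$, together with the independence of $W$ and $\mu$; and second, the elementary Young-type inequality $2\langle a,b\rangle\le \lambda|a|^2+\lambda^{-1}|b|^2$ applied with the free parameters $\lambda_1,\dots,\lambda_6$. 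The monotonicity constants $k_1,k_2$ and the Lipschitz constants from Assumption~1 enter exactly where cross terms $\langle\Delta\hat X,\,\Delta b\rangle$ and $\Delta\hat Y\,\Delta f$ appear, and the bound \eqref{eq2.2} on $\gamma$ controls the $\hat\Gamma$ contributions.

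\textbf{Proof of \eqref{eq5.10} (the forward estimate).}
First I would subtract the two forward equations \eqref{eq5.8} to get a recursion for $\Delta\hat X_{n+1}$, expand $\mathbb{E}_{t_n}|\Delta\hat X_{n+1}|^2$, and discard the martingale cross terms (which vanish under $\mathbb{E}_{t_n}$). This leaves $|\Delta\hat X_n|^2$, the deterministic drift cross term $2\Delta t_n\langle\Delta\hat X_n,\,b(t_n,\hat X^{\pi,1}_n,\varphi^1(\hat X^{\pi,1}_n))-b(t_n,\hat X^{\pi,2}_n,\varphi^2(\hat X^{\pi,2}_n))\rangle$, and the quadratic variation contributions from $\sigma$ and $\beta$. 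For the drift cross term I split $b$'s increment into its $x$-part (bounded via the monotonicity constant $k_1$, giving $k_1|\Delta\hat X_n|^2$) and its $y$-part (bounded by Lipschitz continuity together with $|\varphi^1(\hat X^{\pi,1}_n)-\varphi^2(\hat X^{\pi,2}_n)|$, split by $\lambda_1$ into $L(\varphi^1)|\Delta\hat X_n|^2$ and a $\varphi^1-\varphi^2$ remainder). The $\sigma$ and $\beta$ terms contribute $\sigma_x+\beta_x$ and $\sigma_y+\beta_y$ pieces via Assumption~1(ii). Collecting the coefficient of $|\Delta\hat X_n|^2$ reproduces $A_1$ and $A_2$; the final split with $\lambda_6$ isolates the $\varphi^1-\varphi^2$ term with weights $(1+\lambda_6)$ and $(1+\lambda_6^{-1})$, yielding \eqref{eq5.10}. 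The $b_x h$ and $b_y h$ corrections in $A_1,A_2$ come from the $O(\Delta t_n^2)$ term $|\Delta b|^2(\Delta t_n)^2$, absorbed using $\Delta t_n\le h$.

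\textbf{Proof of \eqref{eq5.11} (the backward estimate).}
Here I work from the representations of $\hat Z^\pi_{t_n},\hat\Gamma^\pi_{t_n}$ as conditional expectations against $\Delta W_{t_n}$ and against $\int_E\gamma\,\tilde\mu$. Taking the difference of the backward recursion \eqref{eq5.9}, squaring, and taking $\mathbb{E}_{t_n}$, the orthogonality of the increments gives the discrete Pythagorean identity $\mathbb{E}_{t_n}|\Delta\hat Y_{n+1}|^2=|\Delta\hat Y_n-\Delta f\,\Delta t_n|^2+\Delta t_n|\Delta\hat Z_n|^2+\Delta t_n\int_E\mathbb{E}_{t_n}|\Delta\hat U_n|^2\lambda(de)$, and one relates $\int_E|\Delta\hat U|^2\lambda(de)$ to $|\Delta\hat\Gamma_n|^2$ through the Cauchy--Schwarz bound $|\Delta\hat\Gamma_n|^2\le\bigl(\int_E\gamma^2\lambda(de)\bigr)\int_E|\Delta\hat U|^2\lambda(de)$, whence the factor $1/K_\gamma^2$ and the integral $\int_E\gamma^2(e)\lambda(de)$ appearing in $A_3,A_4,A_5,A_6$. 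The monotonicity constant $k_2$ governs the cross term $-2\Delta t_n\,\Delta\hat Y_n\,\Delta f$, contributing $2k_2\,\Delta t_n|\Delta\hat Y_n|^2$; the remaining $f$-increments are split by $\lambda_2,\dots,\lambda_5$ into the $\Delta\hat X$, $\Delta\hat Z$, $\Delta\hat\Gamma$ and $\Delta\hat Y$ channels, producing precisely the constants $A_3$ through $A_6$. The constraint \eqref{eq4.10} ($A_3\le1$, $A_4\le1$) guarantees the coefficients of $\Delta t_n|\Delta\hat Z_n|^2$ and $\Delta t_n|\Delta\hat\Gamma_n|^2$ on the left are nonnegative, so the inequality is not vacuous.

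\textbf{Main obstacle.}
The delicate point is the \emph{simultaneous} control of the jump terms: the quantity that naturally appears is $\int_E\mathbb{E}_{t_n}|\Delta\hat U_n(e)|^2\lambda(de)$, but the statement is phrased in terms of $\Delta\hat\Gamma_n$, which is only the $\gamma$-weighted integral of $\Delta\hat U$. Passing from one to the other requires the Cauchy--Schwarz step above, and one must verify that the resulting loss factor $\frac{1}{K_\gamma^2}\int_E\gamma^2(e)\lambda(de)$ is finite and correctly tracked through every $\lambda_i$-splitting so that it lands exactly in $A_3,A_4,A_5,A_6$ as written; this bookkeeping, rather than any single inequality, is where errors are most likely to creep in. A secondary subtlety is ensuring that all the $O(h)$ correction terms (the $b_xh$, $b_yh$, $f_yh$, $f_xh$ pieces) are uniformly absorbable \emph{for $h$ small enough}, which is exactly the hypothesis invoked, and that the conditional It\^o isometries are applied legitimately given only $\hat X^{\pi,i}_{t_n},\hat Y^{\pi,i}_{t_n}\in L^2(\Omega,\mathcal F_{t_n},P)$.
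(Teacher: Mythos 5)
Your proposal follows essentially the same route as the paper: expand $\mathbb{E}_{t_n}|\Delta\hat X_{n+1}|^2$ using the vanishing of the martingale cross terms, treat the drift via the monotonicity constant $k_1$ and the $\lambda_1,\lambda_6$ Young splittings for the forward bound; and for the backward bound, square the difference relation, use the conditional isometry, lower-bound the $Z$- and $U$-integrals by the projected quantities $\Delta\hat Z_n$ and $\Delta\hat\Gamma_n$ via Cauchy--Schwarz with the $\gamma$-weighting and $K_\gamma$, then distribute the $\Delta f$ terms over the $\lambda_2,\dotsc,\lambda_5$ channels. This matches the paper's proof in structure and in every key device, so no further comparison is needed.
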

\begin{proof} For simplicity, let us denote
 	\begin{eqnarray*}
 	\begin{aligned}	
 		\Delta b_{n}=&b(t_{n}, \hat{X}^{\pi,1}_{t_{n}},\varphi^{1}(\hat{X}^{\pi,1}_{t_n}))- b(t_{n}, \hat{X}^{\pi,2}_{t_{n}}, \varphi^{2}(\hat{X}^{\pi,2}_{t_n})),\\
 	 		\Delta \sigma_{n}=&\sigma(t_{n},\hat{X}^{\pi,1}_{t_{n}},\varphi^{1}(\hat{X}^{\pi,1}_{t_n})) -\sigma(t_{n},\hat{X}^{\pi,2}_{t_{n}},\varphi^{2}(\hat{X}^{\pi,2}_{t_n})),\\ 	
 		\Delta \beta_{n}=&\beta(t_{n}, \hat{X}^{\pi,1}_{t_{n}},\varphi^{1}(\hat{X}^{\pi,1}_{t_n}), e)-\beta(t_{n}, \hat{X}^{\pi,2}_{t_{n}},\varphi^{2}(\hat{X}^{\pi,2}_{t_n}), e),\\ 	
 		\Delta f_{n}=&f(t_{n},\hat{X}^{\pi,1}_{t_{n}},\hat{Y}_{t_{n}}^{\pi,1},Z_{t_{n}}^{\pi,1},\Gamma_{t_{n}}^{\pi,1})
 -f(t_{n},\hat{X}^{\pi,2}_{t_{n}},\hat{Y}_{t_{n}}^{\pi,2},Z_{t_{n}}^{\pi,2},\Gamma_{t_{n}}^{\pi,2}),\\ 	
 		\Delta Z_{n}:=&Z_{t_n}^{1}-Z_{t_n}^{2},\quad
 		\Delta U_{n}:=U_{t_{n}}^{1}-U_{t_{n}}^{2}.
 	\end{aligned}
 \end{eqnarray*}
Then from (\ref{eq5.8}), we obtain
 	\begin{eqnarray}\label{eq4.13}
 	\begin{aligned}
 		\mathbb{E}_{t_{n}}\{|\Delta  \hat{X}_{n+1}|^2\}=&\mathbb{E}_{t_{n}}\left\{\left| \Delta  \hat{X}_{n}+ \Delta b_{n}\Delta  t_{n} + \Delta \sigma_{n} \Delta  W_{t_{n}}+\int_{E}	\Delta \beta_{n}\tilde{\mu}(de,(t_{n},t_{n+1}])\right|^2 \right\}
 		\\ 
 		 \leq& [1+(b_{x}\Delta  t_{n}+\sigma_{x}+2k_1+\lambda_1+\beta_{x})\Delta  t_{n}]\mathbb{E}_{t_{n}}\left\{|\Delta   \hat{X}_{n}|^2\right\}\\
 &+(\lambda_1^{-1}b_{y}+\sigma_{y}+b_{y}\Delta  t_{n}+\beta_{y})\Delta  t_{n}\mathbb{E}_{t_{n}}\left\{|\varphi^{1}(\hat{X}^{\pi,1}_{t_n})-\varphi^{2}(\hat{X}^{\pi,2}_{t_n})|^2\right\} \\
 		\leq& (1+A_{1}h)\mathbb{E}_{t_{n}}\left\{|\Delta  \hat{X}_{n}|^2\right\}\\&
 +A_{2}h \mathbb{E}_{t_{n}}\left\{|\varphi^{1}(\hat{X}^{\pi,1}_{t_n})
 -\varphi^{1}(\hat{X}^{\pi,2}_{t_n})+\varphi^{1}(\hat{X}^{\pi,2}_{t_n})
 -\varphi^{2}(\hat{X}^{\pi,2}_{t_n})|^2\right\} \\
 		\leq& [1+A_{1}h+(1+\lambda_{6})A_{2}hL(\varphi^{1})]\mathbb{E}_{t_{n}}\left\{| \Delta  \hat{X}_{n}|^2\right\}\\&+(1+\lambda_{6}^{-1})A_{2}h \mathbb{E}_{t_{n}}\left\{|\varphi^{1}(\hat{X}^{\pi,2}_{t_n})-\varphi^{2}(\hat{X}^{\pi,2}_{t_n})|^2\right\},
 	\end{aligned}
 \end{eqnarray}
and therefore (\ref{eq5.10}) is proved.

Similarly, from (\ref{eq5.9}), we have
 	\begin{eqnarray}\label{eq5.12}
 \Delta \hat{Y}_{n}+\int_{t_{n}}^{t_{n+1}}\Delta  Z_{n} dW_{n}+\int_{t_{n}}^{t_{n+1}}\int_{E} \Delta U_{n}(e) \tilde{\mu}(de,dt)= \Delta \hat{Y}_{n+1} + \Delta  f_{n} \Delta t_{n}.	
 \end{eqnarray}
Squaring and taking conditional expectation on both sides of equation  (\ref{eq5.12}), we obtain
	\begin{eqnarray}\label{eq5.012}
\begin{aligned}
|\Delta  \hat{Y}_{n}|^2&+\mathbb{E}_{t_{n}}\left\{\int_{t_{n}}^{t_{n+1}}|\Delta  Z_{n}|^2dt\right\}+\mathbb{E}_{t_{n}}\left\{\int_{t_{n}}^{t_{n+1}}\int_{E}|\Delta  U_{n}(e)|^2\lambda(de)dt\right\}\\&=\mathbb{E}_{t_{n}}\{|\Delta  \hat{Y}_{n+1}|^2 + 2\Delta \hat{Y}_{n+1}\Delta  f_{n}\Delta  t_{n}+|\Delta  f_{n}|^2\Delta  t_{n}^2\}.	
\end{aligned}
\end{eqnarray}
Next we estimate the second and third terms on the left-hand side of (\ref{eq5.012}). The second term can be estimated by the following inequality, in view of Cauchy-Schwarz inequality,
 	\begin{eqnarray}\label{eq3.31}
 \begin{aligned}
 \mathbb{E}_{t_{n}}\left\{\int_{t_{n}}^{t_{n+1}}|\Delta  Z_{n}|^2dt\right\}& \geq \Delta  t_{n} |\Delta  \hat{Z}_{n}|^2+2\Delta  t_{n} \Delta  \hat{Z}_{n}\mathbb{E}_{t_{n}}\{\Delta f \Delta  W_{t_{n}}\}\\ &\geq (1-\lambda_{2})\Delta t_{n}|\Delta  \hat{Z}_{n}|^2-\lambda_{2}^{-1}\Delta t_{n}^{2}\mathbb{E}_{t_{n}}\{|\Delta  f_{n}|^2\}.
 \end{aligned}
 \end{eqnarray}
To estimate the third term on the left-hand side of (\ref{eq5.012}), we use the condition \eqref{eq2.2} to get
 \begin{align*}
 \int_{t_{n}}^{t_{n+1}}\int_{E}|\gamma(e)\Delta  U_{n}(e)|^2\lambda(de)dt \leq K_{\gamma}^2\int_{t_{n}}^{t_{n+1}}\int_{E}|\Delta  U_{n}(e)|^2\lambda(de)dt.
 \end{align*}
As a consequence, we have
  	\begin{align}\label{eq5.16a}	
	\mathbb{E}_{t_{n}}\left\{\int_{t_{n}}^{t_{n+1}}\int_{E}|\Delta  U_{n}(e)|^2\lambda(de)dt\right\} & \geq \frac{1}{K^2_{\gamma}}\mathbb{E}_{t_{n}}\left\{\int_{t_{n}}^{t_{n+1}}\int_{E}|\gamma(e)\Delta  U_{n}(e)|^2\lambda(de)dt\right\}
\nonumber\\& \geq \frac{1}{\Delta  t_{n}K_{\gamma}^2}\mathbb{E}_{t_{n}}\left\{\left| \int_{t_{n}}^{t_{n+1}}\int_{E}\gamma(e)\Delta  U_{n}(e)\lambda(de)dt\right|^2\right\}.
	\end{align}	
On the other hand, from (\ref{eq5.12}), we also have
	\begin{eqnarray}\label{eq5.17a}
	\begin{aligned}	
		\mathbb{E}_{t_{n}}&\left\{\int_{t_{n}}^{t_{n+1}}\int_{E}\gamma(e)\Delta  U_{n}(e)\lambda(de)dt\right\} \\&= \mathbb{E}_{t_{n}}\left\{\int_{t_{n}}^{t_{n+1}}\int_{E}\Delta  U_{n}(e)\tilde{\mu}(de,dt)\int_{E}\gamma(e)\tilde{\mu}(de,(t_{n},t_{n+1}])\right\}
		\\& = \mathbb{E}_{t_{n}}\{[\Delta  \hat{Y}_{n+1}+\Delta  f_{n} \Delta  t_{n}]\int_{E}\gamma(e)\tilde{\mu}(de,(t_{n},t_{n+1}])\}\\ &= \Delta  t_{n}\left[\Delta  \hat{\Gamma}_{n}+\mathbb{E}_{t_{n}}\left\{\Delta  f_{n}\int_{E}\gamma(e)\tilde{\mu}(de,(t_{n},t_{n+1}])\right\}\right].
	\end{aligned}	
\end{eqnarray}
Substituting (\ref{eq5.17a}) into (\ref{eq5.16a}) yields
	\begin{eqnarray}\label{eq3.32}
\begin{aligned}	
\mathbb{E}_{t_{n}}&\left\{\int_{t_{n}}^{t_{n+1}}\int_{E}|\Delta U_{n}(e)|^2 \lambda(de)dt\right\}
\\ &\geq \frac{\Delta  t_{n}}{K_{\gamma}^2} |\Delta  \hat{\Gamma}_{n}|^2+\frac{2\Delta  t_{n}}{K_{\gamma}^2}\Delta \hat{\Gamma}_{n}\mathbb{E}_{t_{n}}\left\{\Delta  f_{n}\int_{E}\gamma(e)\tilde{\mu}(de,(t_{n},t_{n+1}])\right\}
\\ &\geq \frac{1}{K_{\gamma}^2}(1-\lambda_{3})\Delta  t_{n} |\Delta \hat{\Gamma}|^2-\lambda_{3}^{-1}\frac{1}{K_{\gamma}^2}\int_{E}\gamma^2(e)\lambda(de) \Delta  t_{n}^2\mathbb{E}_{t_{n}}\{|\Delta  f_{n}|^2\}.
\end{aligned}	
\end{eqnarray}
Combining (\ref{eq5.012}), (\ref{eq3.31}) and (\ref{eq3.32}) leads to
	\begin{eqnarray}\label{eq5.19a}
	\begin{aligned}	
		|\Delta  \hat{Y}_{n}|^2 +&(1-\lambda_{2})\Delta  t_{n}|\Delta  \hat{Z}_{n}|^2 + \frac{1}{K_{\gamma}^2}(1-\lambda_{3})\Delta  t_{n}|\Delta \hat{\Gamma}_{n}|^2 \\ \leq & \mathbb{E}_{t_{n}}\left\{|\Delta \hat{Y}_{n+1}|^2+2\Delta  \hat{Y}_{n+1}\Delta  f_{n}\Delta  t_{n}\right.\\
&\left.+\left(1+\lambda_{2}^{-1}+\frac{\lambda_{3}^{-1}}{K_{\gamma}^2}\int_{E}\gamma^2(e)\lambda(de)\right)\Delta  t_{n}^2|\Delta  f_{n}|^2\right\}.
	\end{aligned}	
\end{eqnarray}
Finally, we substitute
	\begin{eqnarray*}
	\begin{aligned}	
		2\Delta   \hat{Y}_{n+1}  \Delta  f_n	\leq& f_{x}|\Delta  \hat{X}_{n}|^2+(1+2k_2+\lambda_{4}^{-1}f_{z}+\lambda_{5}^{-1}f_{\Gamma}) |\Delta  \hat{Y}_{n+1}|^2\\
&+\lambda_{4}|\Delta  \hat{Z}_{n}|^2+\lambda_{5}|\Delta  \hat{\Gamma}_{n}|^2
	\end{aligned}
\end{eqnarray*}
into (\ref{eq5.19a}) and obtain (\ref{eq5.11}). The proof is completed.	
\end{proof}

We need the following a priori estimates for the solution of \eqref{eq5.8}-\eqref{eq5.9}.
 \begin{lemma}[A priori estimates]\label{le5.5}
Let $(\hat{X}^{\pi}_{t_{n}}$, $\hat{Y}^{\pi}_{t_{n}}$, $\hat{Z}^{\pi}_{t_{n}}$, $\hat{\Gamma}^{\pi}_{t_{n}})$ be the solution of \eqref{eq5.8}-\eqref{eq5.9} with $\hat{Y}_{T}^{\pi}=g(\hat X_T^\pi)$ and $\varphi$ replacing by $ \varphi_n $, where $ \varphi_{n} $ is linearly growing functions satisfying
	\begin{eqnarray*}
	|\varphi_{n}(x)|^2 \leq G(\varphi_{n})|x|^2+H(\varphi_{n}).
\end{eqnarray*}
Let
	\begin{eqnarray*}
	G(\varphi):=\sup_{n\ge 0} G(\varphi_{n}), \quad  H(\varphi):=\sup_{n\ge 0} H(\varphi_{n}).
\end{eqnarray*}
Suppose condition \eqref{eq2.2} holds. Then for $0 \leq n \leq N$, we have
\begin{align}\label{eq5.13}
\mathbb{E}_{t_{n}}{|\hat{X}_{t_{n+1}}^{\pi}|^2}\leq&[1+A_{1}h+A_{2}h G(\varphi)]|\hat{X}_{t_n}^{\pi}|^2\nonumber\\
&+[b_{0}+\sigma_{0}+\beta_{0}+b_{0}h+A_{2}H({\varphi})] h,\\
\label{eq5.14}
|\hat{Y}_{t_n}^{\pi}|^2+&(1-A_{3})\Delta t_{n}|\hat{Z}_{t_n}^{\pi}|^2+\frac{1}{K_{\gamma}^{2}}(1-A_{4})\Delta t_{n}|\hat{\Gamma}_{t_n}|^2 \nonumber\\ \leq& (1+A_{5}h)\mathbb{E}_{t_{n}}{|\hat{Y}_{t_{n+1}}^{\pi}|^2}+A_{6}h |\hat{X}_{t_{n}}|^2\nonumber\\&+\left[1+(1+\lambda_{2}^{-1}+\lambda_{3}^{-1}\frac{1}{K_{\gamma}^{2}}\int_{E}\gamma^2(e)\lambda(de))h  \right]f_{0}h.
\end{align}
\end{lemma}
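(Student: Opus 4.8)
The plan is to prove the two a priori bounds \eqref{eq5.13} and \eqref{eq5.14} by the same mechanism used for Lemma \ref{le5.4}: square each step of the Euler scheme \eqref{eq5.8}--\eqref{eq5.9}, take the conditional expectation $\mathbb E_{t_n}\{\cdot\}$, and estimate the resulting terms. The only structural change is that, since we now bound the solution itself rather than the difference of two solutions, I would use the linear growth conditions of Assumption 1(iii) in place of the Lipschitz conditions of Assumption 1(ii), and replace $|\varphi^1-\varphi^2|^2$ by the growth bound $|\varphi_n(x)|^2\le G(\varphi)|x|^2+H(\varphi)$.

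For the forward bound \eqref{eq5.13}, I would apply $\mathbb E_{t_n}|\cdot|^2$ to \eqref{eq5.8}. Because $\Delta W_{t_n}$ and $\tilde\mu$ are independent $\mathcal F_{t_n}$-martingale increments, all cross terms vanish and the It\^o and Poisson isometries give
\begin{eqnarray*}
\mathbb{E}_{t_n}|\hat X^\pi_{t_{n+1}}|^2 &=& |\hat X^\pi_{t_n}|^2 + 2\langle \hat X^\pi_{t_n}, b\rangle\Delta t_n + |b|^2(\Delta t_n)^2 \\
&& +\,\mathrm{Tr}(\sigma\sigma^{\mathsf{T}})\Delta t_n + \Delta t_n\int_E|\beta|^2\lambda(de),
\end{eqnarray*}
where $b,\sigma,\beta$ are evaluated at $(t_n,\hat X^\pi_{t_n},\varphi_n(\hat X^\pi_{t_n}))$. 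The inner product is controlled by splitting off $b(t_n,0,\varphi_n)$, applying the monotonicity constant $k_1$ of Assumption 1(i) and Young's inequality with parameter $\lambda_1$; the remaining squared terms are bounded by the growth constants $b_x,b_y,b_0,\sigma_x,\dots,\beta_0$. Substituting $|\varphi_n(\hat X^\pi_{t_n})|^2\le G(\varphi)|\hat X^\pi_{t_n}|^2+H(\varphi)$ then produces the coefficient $1+A_1h+A_2hG(\varphi)$ in front of $|\hat X^\pi_{t_n}|^2$ and the additive constant $[b_0+\sigma_0+\beta_0+b_0h+A_2H(\varphi)]h$, which is \eqref{eq5.13}.

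For the backward bound \eqref{eq5.14}, I would rearrange \eqref{eq5.9} as $\hat Y^\pi_{t_n}+\int_{t_n}^{t_{n+1}}Z\,dW+\int_{t_n}^{t_{n+1}}\int_E U\,\tilde\mu(de,dt) = \hat Y^\pi_{t_{n+1}}+f\Delta t_n$, square it and take $\mathbb E_{t_n}\{\cdot\}$, obtaining the counterpart of \eqref{eq5.012}. The two integral terms on the left are bounded below by $(1-\lambda_2)\Delta t_n|\hat Z^\pi_{t_n}|^2$ and $\frac{1}{K_\gamma^2}(1-\lambda_3)\Delta t_n|\hat\Gamma^\pi_{t_n}|^2$ through the Cauchy--Schwarz and $\gamma$-weighted projection estimates \eqref{eq3.31}--\eqref{eq3.32}, where condition \eqref{eq2.2} enters. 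The cross term $2\hat Y^\pi_{t_{n+1}}f\,\Delta t_n$ is treated exactly as in Lemma \ref{le5.4}: the monotonicity of $f$ in $y$ (constant $k_2$) absorbs the $y$-contribution, while Young's inequality with parameters $\lambda_4,\lambda_5$ transfers the $z$- and $v$-contributions onto the $|\hat Z^\pi_{t_n}|^2$ and $|\hat\Gamma^\pi_{t_n}|^2$ terms. Finally the term $|f|^2(\Delta t_n)^2$ is bounded by the growth of $f$, whose constant $f_0$ assembles the extra additive term $[1+(1+\lambda_2^{-1}+\frac{\lambda_3^{-1}}{K_\gamma^2}\int_E\gamma^2(e)\lambda(de))h]f_0 h$; collecting coefficients gives $A_3,\dots,A_6$ and \eqref{eq5.14}.

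The main obstacle is not any single hard estimate but the bookkeeping of matching constants: one must choose the auxiliary parameters $\lambda_2,\lambda_3$ (and $\lambda_4,\lambda_5$) so that the constraint \eqref{eq4.10}, i.e.\ $A_3\le 1$ and $A_4\le 1$, holds, which is exactly what is needed to retain the $|\hat Z^\pi_{t_n}|^2$ and $|\hat\Gamma^\pi_{t_n}|^2$ terms on the left with nonnegative coefficients, and to ensure $h$ is small enough for the $(\Delta t_n)^2$ remainders to be absorbed. Since the genuinely delicate ingredients -- the jump isometry and the projection estimate yielding the $\hat\Gamma$ lower bound -- have already been established in the proof of Lemma \ref{le5.4}, the present argument is a parallel computation with growth replacing Lipschitz bounds, and I do not expect new technical difficulties.
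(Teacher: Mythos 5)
Your proposal is correct and follows essentially the same route as the paper: the forward bound is obtained by squaring \eqref{eq5.8}, using the vanishing of the martingale cross terms and the growth conditions of Assumption 1(iii) together with the linear growth of $\varphi_n$, and the backward bound is obtained by reusing the intermediate inequality from the proof of Lemma \ref{le5.4} (the analogue of \eqref{eq5.19a}) with the Lipschitz bounds replaced by growth bounds, exactly as you describe. No gaps.
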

\begin{proof} From (\ref{eq5.8}) and the assumption of the lemma, one gets
		\begin{align*}	
\mathbb{E}_{t_{n}}{|\hat{X}^{\pi}_{t_{n+1}}|^2}
=& \mathbb{E}_{t_{n}}\left|\hat{X}^{\pi}_{t_{n}}+b(t_{n}, \hat{X}^{\pi}_{t_{n}}, \varphi_n(\hat{X}^{\pi}_{t_n}))\Delta  t_{n}+\sigma(t_{n},\hat{X}^{\pi}_{t_{n}},\varphi_n(\hat{X}^{\pi}_{t_n})) \Delta  W_{t_{n}}\right.\\& \left.+\int_{E}\beta(\hat{X}^{\pi}_{t_{n}},\varphi_n(\hat{X}^{\pi}_{t_n}),e)\tilde{\mu}(de,(t_{n},t_{n+1}])\right|^2
\\\leq&
\mathbb{E}_{t_{n}}|\hat{X}^{\pi}_{t_{n}}|^2+2\Delta  t_{n}\mathbb{E}_{t_{n}}\left\{\left(b(t_{n}, \hat{X}^{\pi}_{t_{n}}, \varphi_n(\hat{X}^{\pi}_{t_n}))\right)^T\hat{X}^{\pi}_{t_{n}}\right\}\\&+\mathbb{E}_{t_{n}}\left\{b^{2}(t_{n}, \hat{X}^{\pi}_{t_{n}}, \varphi_n(\hat{X}^{\pi}_{t_n}))(\Delta  t_{n})^{2}\right\}+\mathbb{E}_{t_{n}}\left\{\sigma^2(t_{n}, \hat{X}^{\pi}_{t_{n}}, \varphi_n(\hat{X}^{\pi}_{t_n}))\Delta  t_{n}\right\}
\\&+\int_{E}\beta^{2}(\hat{X}^{\pi}_{t_{n}},\varphi_n(\hat{X}^{\pi}_{t_n}),e)\lambda(de)\Delta t_{n}
\\ \leq& [1+A_{1}h] |\hat{X}_{t_n}^{\pi}|^2+A_{2}h|\varphi_n(\hat{X}^{\pi}_{t_n})|^2+[b_{0}+\sigma_{0}+\beta_{0}+b_{0}h]h
\\
\leq& [1+A_{1}h+A_{2}h G(\varphi)]|\hat{X}_{t_n}^{\pi}|^2+[b_{0}+\sigma_{0}+\beta_{0}+b_{0} h+A_{2}H(\varphi)] h,
\end{align*}	
which implies that (\ref{eq5.13}) holds.

Next, applying Lemma \ref{le5.4} yields
	\begin{eqnarray*}
	\begin{aligned}	
		|\hat{Y}_{t_n}|^2 &+(1-\lambda_{2})\Delta  t_{n}| \hat{Z}_{t_n}|^2 + \frac{1}{K_{\gamma}^2}(1-\lambda_{3})\Delta  t_{n}| \hat{\Gamma}_{t_n}|^2 \\ & \leq \mathbb{E}_{t_{n}}\left\{|\hat{Y}_{t_{n+1}}|^2+2  \hat{Y}_{t_{n+1}} f\Delta  t_{n}+(1+\lambda_{2}^{-1}\right.\\
&\left.+\frac{1}{K_{\gamma}^2}\lambda_{3}^{-1}\int_{E}\gamma^2(e)\lambda(de))\Delta  t_{n}|f_{t_{n}}|^2\right\}.	
\end{aligned}	
\end{eqnarray*}
In view of
	\begin{eqnarray*}
	\begin{aligned}		
		\hat{Y}_{n+1} f_n	 &\leq f_{x}| \hat{X}_{n}|^2+(1+2k_2+\lambda_{4}^{-1}f_{z}+\lambda_{5}^{-1}f_{\Gamma}) | \hat{Y}_{n+1}|^2+\lambda_{4}| \hat{Z}_{n}|^2+\lambda_{5}| \hat{\Gamma}_{n}|^2.
	\end{aligned}	
\end{eqnarray*}
using (iii) of Assumption 1, we obtain (\ref{eq5.14}) and therefore complete the proof.
\end{proof}

Now we employ Lemmas \ref{le5.4} and \ref{le5.5} to show Lemma \ref{le5.3}.
\begin{proof}[of Lemma \ref{le5.3}]. To show Lemma \ref{le5.3}, let us introduce	
\begin{align*}
&\hat{X}^{\pi,1}_{t_n}=\hat{X}^{\pi}_{t_{n},m+1},&\quad &\hat{Y}^{\pi,1}_{t_n}=\hat{Y}^{\pi}_{t_{n},m+1},&\quad &\varphi^{1}(\hat{X}^{\pi,1}_{t_n})=u^{\pi}_{t_{n},m-1}(\hat{X}^{\pi,2}_{t_n}),&
\\
&\hat{X}^{\pi,2}_{t_n}= \hat{X}^{\pi}_{t_{n},m},&\quad
&\hat{Y}^{\pi,2}_{t_n}= \hat{Y}^{\pi}_{t_{n},m},&\quad &\varphi^{2}(\hat{X}^{\pi,2}_{t_n})=u^{\pi}_{t_{n},m}(\hat{X}^{\pi,1}_{t_n}).&
\end{align*}
In view of the above notations, we can set $\lambda_{6}=0$ in (\ref{eq4.13}) and therefore obtain from Lemmas \ref{le5.4} and \ref{le5.5} that
	\begin{align*}
\mathbb{E}_{t_{n}}{| \Delta  X_{n+1}|^2}\leq[1+A_{1}h+A_{2} hL(u_{t_{n},m-1}^{\pi})]| \Delta  X_{n}|^2,
\end{align*}
and
\begin{align*}
  |\Delta  Y_{n}|^2+(1-A_{3})\Delta  t_{n}| \Delta  Z_{n}|^2+&\frac{1}{K_{\gamma}^{2}}(1-A_{4})\Delta  t_{n}| \Delta  \Gamma_{n}|^2\\&\leq (1+A_{5}h)\mathbb{E}_{t_{n}}{|\Delta  Y_{n+1}|^2}+A_{6}h|\Delta  X_{n}|^2.
\end{align*}
In view of (\ref{eq4.10}) and $|\Delta  Y_{n}|=|u^{\pi}_{t_{n},m+1}-u^{\pi}_{t_{n},m}|$, we can get
	\begin{align*}
\begin{aligned}
|\Delta  u_{t_{n},m}^{\pi}|^2   \leq &(1+A_{5}h)\mathbb{E}_{t_{n}}{|\Delta  Y_{n+1}|^2}+A_{6}h | \Delta  X_{n}|^2 \\ \leq &[1+A_{5} h][1+A_{1} h+A_{2} hL(u_{t_{n},m-1}^{\pi})]L(u_{t_{n+1},m}^{\pi})|\Delta  X_{n}|^2\\&+A_{6}h|\Delta  X_{n}|^2.
\end{aligned}
\end{align*}
Then we have
	\begin{align*}
L(u_{t_{n},m}^{\pi})  \leq&  [1+A_{5}h][1+A_{1} h+A_{2} hL(u_{t_{n},m-1}^{\pi})]L(u_{t_{n+1},m}^{\pi})+A_{6} h
\\ \leq& \left\{1+\left[A_{1}+A_{5}+A_{1}A_{5} h+(A_{2}+A_{2}A_{5}h)L(u_{t_{n},m-1}^{\pi})\right]h\right\} L(u_{t_{n+1},m}^{\pi})\\
&+A_{6}h	\\
\leq & \left\{1+\left[(A_{1}+A_{5}+A_{1}A_{5} h+(A_{2}+A_{2}A_{5}h)L(u_{t_{n},m-1}^{\pi}))\vee 0\right]h\right\}L(u_{t_{n+1},m}^{\pi})\\
&+A_{6}h.
\end{align*}
Applying discrete Gronwall inequality and $L(u_{t_{N},m}^{\pi})=g_{x}$ leads to
	\begin{eqnarray}\label{eq5.15}
\begin{aligned}
L(u_{t_{n},m}^{\pi})  \leq& [g_{x}+A_{6}T]\\& \times [\exp((A_{1}+A_{5}+A_{1}A_{5} h)T+(A_{2}+A_{2}A_{5}h)TL(u_{t_{n,m-1}}^{\pi})) \vee 1].
\end{aligned}
\end{eqnarray}
Multiplying $(A_{2}+A_{2}A_{5}\Delta _{t_{n}})T$ on the both sides of equation (\ref{eq5.15}) and using $L(u_{t_{n},0}^{\pi})=0$, we get
	\begin{eqnarray}\label{eq5.15a}
     \begin{aligned}
       (A_{2}&+A_{2}A_{5}h)TL(u_{t_{n},m}^{\pi})\\ &
       \begin{aligned}
          \leq &[A_{2}+A_{2}A_{5}h]T[g_{x}+A_{6}T] \\&\times\left\{\exp([A_{1}+A_{5}+A_{1}A_{5} h]T+[A_{2}+A_{2}A_{5}h]TL(u_{t_{n},m-1}^{\pi})) \vee 1\right\}
            \end{aligned}	
       \\ &
       \begin{aligned}
       \leq &[A_{2}+A_{2}A_{5}h]T[g_{x}+A_{6}T]\\& \times\left\{ \exp([A_{1}+A_{5}+A_{1}A_{5} h]T+[A_{2}+A_{2}A_{5}h][g_{x}+A_{6}T]T\right.\\
       &+[A_{2}+A_{2}A_{5}h][g_{x}+A_{6}T]T[\exp([A_{1}+A_{5}+A_{1}A_{5} h]T\\
       &\left.+[A_{2}+A_{2}A_{5}h]TL(u_{t_{n},m-2}^{\pi}))] ) \vee 1\right\}.
     \end{aligned}
      \end{aligned}
     \end{eqnarray}
Now we set 	\begin{eqnarray*}
	\begin{aligned}
		L_{0}(\lambda,h):= &[A_{2}+A_{2}A_{5}h]T[g_{x}+A_{6}T]\\ &\times \exp([A_{1}+A_{5}+A_{1}A_{5}h]T+[A_{2}+A_{2}A_{5}h][g_{x}+A_{6}T]T),\\	
		L_{1}(\lambda, h):=& [g_{x}+A_{6}T] [\exp([A_{1}+A_{5}+A_{1}A_{5}h]T\\&+[A_{2}+A_{2}A_{5}h][g_{x}+A_{6}T]T+1) \vee 1],
	\end{aligned}
\end{eqnarray*}
and choose
	\begin{eqnarray}\label{eq5.16}
\begin{aligned}
&\lambda_{2} :=\sqrt{\Delta  t_{n}},&  &\lambda_{4} :=1-\left(1+f_{z}+\frac{f_{z}}{K_{\gamma}^2}\int_{E}\gamma(e)^2\lambda(de)\right)\sqrt{\Delta  t_{n}}-f_{z}\Delta  t_{n},&
\\ &\lambda_{3} :=\sqrt{\Delta  t_{n}},& &\lambda_{5} :=\frac{1}{K_{\gamma}^2}\left[1-\left(1+f_{\Gamma}
+\frac{f_{\Gamma}}{K_{\gamma}^{2}}\int_{E}\gamma(e)^2\lambda(de)\right)\sqrt{\Delta  t_{n}}-f_{\Gamma }\Delta  t_{n}\right].&
\end{aligned}
\end{eqnarray}
Then $A_{3}=1$, $A_{4}=1$ and
	\begin{eqnarray*}
	\begin{aligned}
		\mathop{\lim}\limits_{h \to 0}&L_{0}(\lambda,h) =L_{0},\qquad
		\mathop{\lim}\limits_{h \to 0}L_{1}(\lambda,h)=L_{1}.
	\end{aligned}
\end{eqnarray*}
Hence, for any small enough $h$ and any
$\bar{L}>L_{1}$, if $ L_{0} \leq e^{-1} $, from (\ref{eq5.15a}) we have
	\begin{eqnarray*}
	\begin{aligned}
		L(u_{t_{n},m}^{\pi}) \leq& L_{1}(\lambda, h),
	\end{aligned}
\end{eqnarray*}
and therefore $L(u_{t_{n},m}^{\pi})\leq \bar{L}$.

Now, we study the linear growth condition (\ref{eq5.7a}) on $u_{t_{n},m}^{\pi}$. Similarly, it is easy get the following inequalities,
\begin{align*}
		G(u_{t_{n},m}^{\pi}) \leq& [g_{x}+A_{6}T][\exp([A_{1}+A_{5}+A_{1}A_{5} h]T+[A_{2}+A_{2}A_{5}h]TG(u_{t_{n},m-1}^{\pi})) \vee 1],\\
H(u_{t_{n},m}^{\pi}) \leq& [e^{A_{5}T} \vee 1]g_{0}+\left[f_{0}+f_{0} (1+\lambda_{2}^{-1}+\lambda_{3}^{-1}\frac{1}{K_{\gamma}^{2}}\int_{E}\gamma(e)^2\lambda(de))h\right]\Upsilon_{0}^{n}(A_{5})\\&+[b_{0}+\sigma_{0}+\beta_{0}+b_{0} h+A_{2}H(u_{t_{n},m-1}^{\pi})]\\& \times \left[g_{x}\Upsilon_{1}^{n}(A_{5},A_{1}+A_{2}G(u_{t_{n},m-1}^{\pi}))+A_{6}\Upsilon_{0}^{n}(A_{5})\Upsilon_{0}^{n}(A_{1}+A_{2}G(u_{t_{n},m-1}^{\pi}))\right]
\\=&c_{1}\left(\lambda,h,G(u_{t_{n},m-1}^{\pi})\right)H(u_{t_{n},m-1}^{\pi})+L_{2}\left(\lambda,h,G(u_{t_{n},m-1}^{\pi})\right).
\end{align*}
Here
\	\begin{eqnarray*}
\begin{aligned}
	c_{0}(\lambda,h,&G(u_{t_{n},m-1}^{\pi}))\\:=&	\left[g_{x}\Upsilon_{1}^{n}(A_{5},A_{1}+A_{2}G(u_{t_{n},m-1}^{\pi}))+A_{6}\Upsilon_{0}^{n}(A_{5})\Upsilon_{0}^{n}(A_{1}+A_{2}G(u_{t_{n},m-1}^{\pi}))\right],\\
	c_{1}(\lambda,h,&G(u_{t_{n},m-1}^{\pi})):=A_{2}	c_{0}(\lambda,h,G(u_{t_{n},m-1}^{\pi})),\\
	L_{2}(\lambda,h,&G(u_{t_{n},m-1}^{\pi}):=	[b_{0}+\sigma_{0}+\beta_{0}+b_{0} h]c_{0}(\lambda,h,G(u_{t_{n},m-1}^{\pi}))+[e^{A_{5}T} \vee 1]g_{0}\\&+\left[f_{0}+f_{0} (1+\lambda_{2}^{-1}+\lambda_{3}^{-1}\frac{1}{K_{\gamma}^{2}}\int_{E}\gamma(e)^2\lambda(de))h\right]\Upsilon_{0}^{n}(A_{5}).
\end{aligned}
\end{eqnarray*}
We still choose $\lambda_{i}$  as in (\ref{eq5.16}), $i=2, 3, 4, 5$. Noting that
	\begin{eqnarray*}
	\begin{aligned}
		\mathop{\lim}\limits_{h\to 0}&c_{1}(\lambda,h,G(u_{t_{n},m-1}^{\pi}))=c_{1}(G),\qquad
		\mathop{\lim}\limits_{h \to 0}L_{2}(\lambda,h,G(u_{t_{n},m-1}^{\pi}))=L_{2}(G),
	\end{aligned}
\end{eqnarray*}
because of $ L_{0} \leq e^{-1}$, $c_{1}(L_{1})\leq 1 $, for any small enough $ h$, and for any
$\bar{G}>L_{1}$, $c_{1}(\bar{G})\leq c_{1}<1$, $L_{2}(\bar{G})\leq L_{2}$, we have $G(u_{t_{n},m}^{\pi})\leq \bar{G}$, $H(u_{t_{n},m}^{\pi})\leq \frac{L_{2}}{1-c_{1}}$. This completes the proof of Lemma \ref{le5.3}.
\end{proof}

Now we define
\begin{align*}	c_{2}&(\lambda_{1},L_{1},L_{1})\\:=&\left[e^{[1+2k_2+\sigma_{x}+\beta_{x}+[b_{y}+\sigma_{y}+\beta_{y}]L_{1}]T}\vee 1\right](1+\lambda_{1}^{-1})[b_{y}+\sigma_{y}+\beta_{y}]T[g_{x}\\&
	\times \Upsilon_{1}\left([1+2k_2+f_{z}+f_{\Gamma}]T,[2+2k_1+\sigma_{x}+\beta_{x}]T+(1+\lambda_{1})[b_{y}+\sigma_{y}+\beta_{y}]L_{1}T\right)\\&
+f_{x}T\Upsilon_{0}([2+k_2+f_{z}+f_{\Gamma}]T)\Upsilon_{0}([1+k_1+\sigma_{x}+\beta_{x}]T\\
&+(1+\lambda_{1})[b_{y}+\sigma_{y}+\beta_{y}]L_{1}T)]\vspace{2ex},\end{align*}
and
\begin{align*}	
	c_{2}&(L_{1},L_{1}):=\mathop{\inf}\limits_{\lambda_{1}>1}c_{2}(\lambda_{1},L_{1},L_{1}).
	\end{align*}
Then we have the following theorem which implies the convergence of the Markovian iteration (\ref{eq5.6}).
\begin{theorem}\label{co5.6}
	Assume $L_{0}\leq e^{-1}$ holds true and
\begin{align}c_{2}(L_{1},L_{1})< 1.
\end{align}
\begin{itemize}
	\item[\rm (i)] For any $\bar{L}>L_{1}$, $\bar{G}>L_{1}$, $c_{1}(L_{1})<c_{1}<1$, $L_{2}>L_{2}(L_1)$, we have
	\begin{eqnarray*}
		L(u_{t_n}^{\pi})\leq \bar{L},\quad G(u_{t_n}^{\pi})	\leq \bar{G}, \quad H(u_{t_n}^{\pi})\leq \bar{H} =\frac{L_{2}}{1-c_{1}};
	\end{eqnarray*}
	
	\item[\rm (ii)] For any $c_{2}(L_{1},L_{1})<c_{2}\leq 1$, and sufficiently small $h$, when $m\to \infty$, we have
	\begin{eqnarray*}
		\mathop{\max}\limits_{0\leq n \leq N}|u^{\pi}_{t_{n},m}-u^{\pi}_{t_{n}}|^2\to 0.
	\end{eqnarray*}
\end{itemize}
\end{theorem}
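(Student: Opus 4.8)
The plan is to read this theorem as a Banach-type fixed-point statement for the map that carries one decoupling field $u^{\pi}_{\cdot,m-1}$ to the next iterate $u^{\pi}_{\cdot,m}$, with the per-step difference estimates of Lemma \ref{le5.4} furnishing the contraction and the uniform Lipschitz/growth bounds of Lemma \ref{le5.3} used to freeze all coefficients. I would establish the convergence claim (ii) first and then read off (i) by passing to the limit. The hypotheses $L_{0}\le e^{-1}$ and $c_{1}(L_{1})<c_{1}<1$ enter exactly through Lemma \ref{le5.3}, which guarantees that, for $h$ small, every iterate obeys $L(u^{\pi}_{\cdot,m})\le\bar L$, $G(u^{\pi}_{\cdot,m})\le\bar G$ and $H(u^{\pi}_{\cdot,m})\le L_{2}/(1-c_{1})$ uniformly in $m$; the remaining hypothesis $c_{2}(L_{1},L_{1})<1$ will be the contraction condition.

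For (ii), I would fix a starting point $(t_{n},x)$ and run two copies of the generic system (\ref{eq5.8})--(\ref{eq5.9}) from the common value $\hat X^{\pi,1}_{t_{n}}=\hat X^{\pi,2}_{t_{n}}=x$: copy~$1$ driven by $\varphi^{1}=u^{\pi}_{\cdot,m}$, which produces $\hat Y^{\pi,1}_{t_{n}}=u^{\pi}_{t_{n},m+1}(x)$, and copy~$2$ driven by $\varphi^{2}=u^{\pi}_{\cdot,m-1}$, which produces $\hat Y^{\pi,2}_{t_{n}}=u^{\pi}_{t_{n},m}(x)$, in the index matching already used in the proof of Lemma \ref{le5.3}. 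Since the forward initial values agree, $\Delta\hat X_{n}=0$; the source term $|\varphi^{1}-\varphi^{2}|^{2}$ in (\ref{eq5.10}) is dominated by the previous gap $\delta_{m}^{2}:=\max_{k}\sup_{x}|u^{\pi}_{t_{k},m}(x)-u^{\pi}_{t_{k},m-1}(x)|^{2}$, while the Lipschitz factor $L(\varphi^{1})=L(u^{\pi}_{\cdot,m})\le\bar L$ is controlled by Lemma \ref{le5.3}. A forward discrete Gronwall inequality applied to (\ref{eq5.10}) then yields $\mathbb{E}_{t_{n}}|\Delta\hat X_{k}|^{2}\le C\,\delta_{m}^{2}$ for all $k\ge n$, with $C$ depending only on $A_{1},A_{2},\bar L,T$.

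Next I would run the backward estimate (\ref{eq5.11}) from the common terminal map $g$, so that $|\Delta\hat Y_{N}|^{2}\le g_{x}|\Delta\hat X_{N}|^{2}$, using the calibration (\ref{eq4.10}) (with the $\lambda_{i}$ as in the proof of Lemma \ref{le5.3}) to annihilate the $\Delta\hat Z$ and $\Delta\hat\Gamma$ terms; a backward discrete Gronwall inequality bounds $|\Delta\hat Y_{n}|^{2}=|u^{\pi}_{t_{n},m+1}(x)-u^{\pi}_{t_{n},m}(x)|^{2}$ by the accumulated $\mathbb{E}|\Delta\hat X_{k}|^{2}$, hence by $\delta_{m}^{2}$. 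Taking $\sup_{x}$ and $\max_{n}$, then letting $h\to 0$ while optimizing over the free multipliers $\lambda_{1}$ (and $\lambda_{6}$) and letting $\bar L,\bar G\downarrow L_{1}$, the aggregate prefactor collapses to $c_{2}(L_{1},L_{1})=\inf_{\lambda_{1}>1}c_{2}(\lambda_{1},L_{1},L_{1})$, giving the contraction $\delta_{m+1}^{2}\le c_{2}\,\delta_{m}^{2}$ for any prescribed $c_{2}\in(c_{2}(L_{1},L_{1}),1]$ and $h$ small. Since $c_{2}(L_{1},L_{1})<1$, one may take $c_{2}<1$, so $\delta_{m}^{2}\to 0$ geometrically, $\sum_{m}\delta_{m}<\infty$, and $(u^{\pi}_{\cdot,m})_{m}$ is Cauchy in the sup-norm. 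Its uniform limit solves the coupled system (\ref{eq5.4}) by passing to the limit in (\ref{eq5.6}) and invoking the uniqueness recalled from \cite{Z1999,BBP1997}; thus it equals $u^{\pi}_{\cdot}$, and $\max_{0\le n\le N}|u^{\pi}_{t_{n},m}-u^{\pi}_{t_{n}}|^{2}\to 0$.

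Part (i) then follows at once: Lemma \ref{le5.3} gives the uniform-in-$m$ bounds $L(u^{\pi}_{\cdot,m})\le\bar L$, $G(u^{\pi}_{\cdot,m})\le\bar G$, $H(u^{\pi}_{\cdot,m})\le L_{2}/(1-c_{1})$, and since uniform limits preserve Lipschitz and linear-growth constants, the limit $u^{\pi}_{\cdot}$ inherits $L(u^{\pi}_{\cdot})\le\bar L$, $G(u^{\pi}_{\cdot})\le\bar G$, $H(u^{\pi}_{\cdot})\le\bar H=L_{2}/(1-c_{1})$. The step I expect to be the main obstacle is the constant bookkeeping in chaining (\ref{eq5.10}) and (\ref{eq5.11}): one must verify that, after the two Gronwall passes, the $h\to 0$ limit, and the optimization over $\lambda_{1}$, the compounded prefactor is precisely $c_{2}(L_{1},L_{1})$ rather than some crude multiple of the coupling strength, for this is exactly what makes the sharp threshold $c_{2}(L_{1},L_{1})<1$ (and not merely a smallness-of-coupling condition) sufficient. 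A secondary subtlety is the passage to the limit in the \emph{nonlinear} Euler recursion (\ref{eq5.6}), where the coefficients $b,\sigma,\beta,f$ are evaluated at the moving fields $u^{\pi}_{\cdot,m-1}$; here I would use their Lipschitz continuity together with the uniform moment bounds from Lemma \ref{le5.5} to justify identifying the limit with the genuine solution of (\ref{eq5.4}).
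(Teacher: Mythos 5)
Your proposal is correct and follows essentially the route the paper intends: the paper's own proof of this theorem is a one-line deferral to Theorem 5.1 of Bender and Zhang \cite{BZ2008} ``employing Lemma \ref{le5.3},'' and your argument --- uniform Lipschitz and linear-growth control of the iterates via Lemma \ref{le5.3}, a contraction estimate for the iteration map obtained by chaining the forward and backward difference bounds (\ref{eq5.10})--(\ref{eq5.11}) with discrete Gronwall from a common starting point, and identification of the uniform limit with the decoupling field of (\ref{eq5.4}) --- is precisely that argument transplanted to the jump setting. The one step neither you nor the paper carries out in detail is the bookkeeping you yourself flag, namely that after the $h\to 0$ limit and the optimization over $\lambda_{1}$ the compounded prefactor is exactly $c_{2}(L_{1},L_{1})$, but this is a verification rather than a gap in the strategy.
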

\begin{proof} Employing Lemma 4.1, the proof of above theorem is similar to that of Theorem 5.1 in \cite{BZ2008}, we are not going to repeat this proof.
\end{proof}

\section{Error estimates for time discretization} We now study the error due to the time discretization. We first present the following theorem which gives the connections between coupled FBSDEJs and nonlinear PIDEs under weaker conditions.

\begin{theorem}\label{th5.7}
	Under Assumptions 1, 2 and 3, there exist a function $u:\mathbb{R} \times \mathbb{R}^{d} \to\mathbb{R}$ that satisfies the following statements
	\begin{enumerate}
	\item [\rm (i)] $|u(t,x_{1})-u(t,x_{2})|^2 \leq L_{1}|x_{1}-x_{2}|^2$, ~$|u(t,x)|^2 \leq L_{1}|x|^2+\frac{L_{2}(L_{1})}{1-c_{1}L_{1}}$.
\item [\rm (ii)] $|u(t,x)-u(s,x)|^2\le C(1+|x|^2)|t-s|$ for some constant $C$.
\item [\rm (iii)] $u$ is a viscosity solution of the PIDEs \eqref{eq2.1}.
    \item [\rm (iv)] The FBSDEJs \eqref{eq3.15 } has a unique solution $(X_{t},Y_{t},Z_{t},\Gamma_{t})$ and $Y_{t}=u(t,X_{t})$. Thus, $(X_{t},Y_{t},Z_{t},\Gamma_{t})$ also solve the following decoupled FBSDEJs
    \begin{eqnarray}\label{eq5.1a}
    	\left\{
    	\begin{aligned}    		X_t=&\xi+\int_{0}^{t}b(s,X_{s},u(s,X_{s}))ds+\int_{0}^{t}\sigma(s,X_{s},u(s,X_{s}))dW_{s}\\&+\int_{0}^{t}\int_{E}\beta(X_{s^-},u(s,X_{s}),e)\tilde{\mu}(de,ds),
    		\\Y_{t}=&g(X_{T})+\int_{t}^{T}f(s,X_t,Y_{s},Z_{s},\Gamma_{s})ds-\int_{t}^{T}Z_{s}dW_{s}\\&-\int_{t}^{T}\int_{E}U_{s}(e)\tilde{\mu}(de,ds).
    	\end{aligned}
    	\right.
    \end{eqnarray}
\item [\rm (v)] Furthermore, we have the following estimates:
\begin{align}\label{eq5.2a}
|\tilde{u}_{t_{n}}^{\pi}(t,x)-u(t_{n},x)|^2 \leq C[1+|\xi|^2]h,
\end{align}
and for any $\epsilon>0$
\begin{eqnarray}\label{eq5.26a}
	\begin{aligned}	
		\mathop{\max}\limits_{0\leq n<N}\mathop{\sup}\limits_{t_{n}\leq t \leq t_{n+1}}(&\mathbb{E}|X_{t}-\tilde{X}_{t_{n}}|^2+\mathbb{E}|Y_{t}-\tilde{Y}_{t_{n}}|^2)+\sum_{n=0}^{N-1}\int_{t_{n}}^{t_{n+1}}\mathbb{E}|Z_{t}-\tilde{Z}_{t_{n}}^{\pi}|^2dt \\ &+\sum_{n=0}^{N-1}\int_{t_{n}}^{t_{n+1}}\mathbb{E}|\Gamma_{t}-\tilde{\Gamma}_{t_{n}}^{\pi}|^2dt\leq C[1+\mathbb{E}|\xi|^2]h^{1-\epsilon},
	\end{aligned}
\end{eqnarray}
where $\tilde{u}_{t_{n}}^{\pi}(t,\tilde{X}_{t_{n}}^{\pi})=\tilde{Y}_{t_{n}}^{\pi}$ and
\begin{eqnarray}\label{eq5.27a}
	\left\{
	\begin{array}{l}
		\begin{aligned}
			\tilde{X}^{\pi}_{t_{n+1}}=& \tilde{X}^{\pi}_{t_{n}}+b(t_{n},\tilde{X}^{\pi}_{t_{n}},u(t_{n},\tilde{X}^{\pi}_{t_{n}}))\Delta t_{n}+\sigma(t_{n},\tilde{X}^{\pi}_{t_{n}},u(t_{n},\tilde{X}^{\pi}_{t_{n}})) \Delta W_{t_{n}}  \\&+\int_{E}\beta(\tilde{X}^{\pi}_{t_{n}},u(t_{n},\tilde{X}^{\pi}_{t_{n}}),e)\tilde{\mu}(de,(t_{n},t_{n+1}]),
		\end{aligned}\\
		\tilde{Z}_{t_{n+1}}^{\pi}=\frac{1}{\Delta  t_{n}}\mathbb{E}_{t_{n}}\{\tilde{Y}^{\pi}_{t_{n+1}}\Delta  W_{t_{n}}\},\vspace{2ex}\\
		\tilde{\Gamma}^{\pi}_{t_{n}}=\frac{1}{\Delta  t_{n}}\mathbb{E}_{t_{n}}\left\{\tilde{Y}^{\pi}_{t_{n+1}}\int_{E}\gamma(e)\tilde{\mu}(de,(t_{n},t_{n+1}])\right\},\vspace{2ex}\\
		\tilde{Y}_{t_{n}}^{\pi}=\mathbb{E}_{t_{n}}\{\tilde{Y}^{\pi}_{t_{n+1}}+f(t_{n}, \tilde{X}^{\pi}_{t_{n}}, \tilde{Y}^{\pi}_{t_{n}}, \tilde{Z}^{\pi}_{t_{n}}, \tilde{\Gamma}^{\pi}_{t_{n}})\}.
	\end{array}
	\right.
\end{eqnarray}
In particular, we have $\epsilon = 0 $ under the conditions (i) or (ii) in Theorem \ref{th5.1}.
     \end{enumerate}
\end{theorem}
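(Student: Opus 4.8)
The plan is to construct the function $u$ as the limit of the Markovian iterates and then transfer all regularity and error bounds from the discrete to the continuous level. First I would invoke Theorem \ref{co5.6}(ii): under $L_{0}\le e^{-1}$ and $c_{2}(L_{1},L_{1})<1$, the Markovian iteration $u_{t_{n},m}^{\pi}$ converges as $m\to\infty$ to a discrete function $u_{t_{n}}^{\pi}$, and by Lemma \ref{le5.3} this limit inherits the uniform-in-$h$ Lipschitz bound $L(u_{t_{n}}^{\pi})\le\bar L$ together with the linear-growth constants $G(u_{t_{n}}^{\pi})\le\bar G$ and $H(u_{t_{n}}^{\pi})\le L_{2}/(1-c_{1})$. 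The crucial point is that these bounds are independent of $h$ and of the dimension $d$, which is exactly what is needed to pass to a limit in $h$.

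For statement (i) I would combine these uniform bounds with an Arzel\`a--Ascoli/diagonal argument: the family $\{u_{t_{n}}^{\pi}\}$, suitably interpolated in $t$, is equi-Lipschitz in $x$ and uniformly bounded on compact sets, so along a subsequence $h\to 0$ it converges locally uniformly to a limit $u(t,x)$ which inherits the Lipschitz constant $L_{1}$ and the growth bound $L_{1}|x|^{2}+L_{2}(L_{1})/(1-c_{1}L_{1})$. For the time-H\"older estimate (ii) I would use the a priori bounds of Lemma \ref{le5.5} together with the Euler dynamics \eqref{eq5.4}: over a single step the increment $u_{t_{n+1}}^{\pi}-u_{t_{n}}^{\pi}$ is controlled by a drift contribution of order $\Delta t_{n}$ and Brownian and compensated-Poisson martingale contributions of order $\sqrt{\Delta t_{n}}$, so that $|u(t,x)-u(s,x)|^{2}\le C(1+|x|^{2})|t-s|$ follows after letting $h\to 0$.

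Statements (iii) and (iv) are the nonlinear Feynman--Kac link. Having produced the Lipschitz, linearly growing $u$, I would verify that it is a viscosity solution of \eqref{eq2.1} via the standard stability/consistency route for the Markovian scheme, relying on \cite{BBP1997,Z1999}; uniqueness of the viscosity solution then identifies $u$ and, through the generalized Feynman--Kac formula, yields existence and uniqueness of the quadruplet $(X_{t},Y_{t},Z_{t},\Gamma_{t})$ solving \eqref{eq3.15 } with $Y_{t}=u(t,X_{t})$. Since $u$ is now a fixed function, the coefficients no longer depend on the unknown backward component, so the coupled system collapses into the decoupled form \eqref{eq5.1a}.

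Finally, for the error estimates (v) I would proceed in two stages. First, $\tilde X^{\pi}$ in \eqref{eq5.27a} is the Euler scheme for the \emph{decoupled} forward jump-SDE obtained by freezing the coefficient at $u(t_{n},\cdot)$; standard strong-convergence results for Euler schemes with a compensated Poisson term give $\max_{n}\sup_{t_{n}\le t\le t_{n+1}}\mathbb{E}|X_{t}-\tilde X_{t_{n}}^{\pi}|^{2}\le C(1+\mathbb{E}|\xi|^{2})h$, and coupling this with the Lipschitz/growth bounds yields \eqref{eq5.2a}. Second, for the backward components I would use the one-step representations in \eqref{eq5.27a}, Young's inequality and the discrete Gronwall lemma to propagate the $Y$-error backward in $n$, and then bound the $Z$- and $\Gamma$-errors through the $L^{2}$ path-regularity terms $\sum_{n}\int_{t_{n}}^{t_{n+1}}\mathbb{E}|Z_{t}-Z_{t_{n}}|^{2}\,dt$ and the analogue for $\Gamma$. \textbf{The main obstacle} is precisely the control of this $Z$-regularity: in the general Lipschitz setting one only obtains a modulus of order $h^{1-\epsilon}$, which is the source of the $\epsilon$ in \eqref{eq5.26a}; under the extra smoothness of condition (i) or the non-degeneracy condition (ii) of Theorem \ref{th5.1}, which guarantee the required path regularity of $Z$ and of the jump integrand (e.g.\ via Malliavin-type or representation arguments), one recovers the full order and thereby $\epsilon=0$.
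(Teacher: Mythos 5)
Your proposal is correct and follows essentially the same route as the paper, whose own proof is only a citation sketch: (i)--(iii) via the Markovian-iteration limit as in Theorem \ref{co5.6} and Theorem 6.1 of \cite{BZ2008}, (iv) and \eqref{eq5.26a} from the decoupled-FBSDEJ discretization results of \cite{E2007} (with the $\epsilon$ arising exactly from the $L^2$ path regularity of $Z$ and the jump integrand, and $\epsilon=0$ under the extra conditions), and \eqref{eq5.2a} from (i), (ii), (iv) as in Corollary 6.2 of \cite{BZ2008}. You simply fill in the details the paper delegates to those references.
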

\begin{proof}
 The proof of (i), (ii) and (iii) is similar to that of Theorem \ref{co5.6} and also similar to that of Theorem 6.1 of \cite{BZ2008}. Since (\ref{eq5.1a}) is decoupled, (iv) and (\ref{eq5.26a}) in (v) can be obtained directly from references \cite {E2007}. The estimate (\ref{eq5.2a}) in (v) can be obtained from (i), (ii) and (iv), and the proof is similar to that of Corollary 6.2 in Bender and Zhang  \cite{BZ2008}.
\end{proof}

Now, we are ready to derive the error estimates for standard time discretization.

\begin{theorem}[Error estimates for time discretization]\label{th5.2}
	Under assumptions 1, 2, 3 and 4, for sufficiently small $h$, equation \eqref{eq5.4} has a solution $(\hat{X}^{\pi}_{t_{n}}$, $\hat{Y}^{\pi}_{t_{n}}$, $\hat{Z}^{\pi}_{t_{n}}$, $\hat{\Gamma}^{\pi}_{t_{n}})$ such that
	\begin{eqnarray}\label{eq5.5}
	  \begin{aligned}
    \mathop{\max}\limits_{n<N}\mathop{\sup}\limits_{t_{n}\leq t \leq t_{n+1}}(\mathbb{E}|X_{t}-\hat{X}_{t_{n}}^{\pi}|^2&+\mathbb{E}|Y_{t}-\hat{Y}_{t_{n}}^{\pi}|^2)+\sum_{n=0}^{N-1}\int_{t_{n}}^{t_{n+1}}\mathbb{E}|Z_{t}-\hat{Z}_{t_{n}}^{\pi}|^2dt \\ &+\sum_{n=0}^{N-1}\int_{t_{n}}^{t_{n+1}}\mathbb{E}|\Gamma_{t}-\hat{\Gamma}_{t_{n}}^{\pi}|^2dt \leq C[1+\mathbb{E}|\xi|^2]h^{1-\epsilon}.
	  \end{aligned}
	\end{eqnarray}
In particular, $\epsilon = 0 $ under the conditions (i) or (ii) in Theorem \ref{th5.1}.
\end{theorem}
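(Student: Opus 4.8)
The plan is to obtain the estimate \eqref{eq5.5} for the coupled scheme \eqref{eq5.4} by comparing it with the decoupled reference scheme \eqref{eq5.27a}, for which Theorem \ref{th5.7}(v) already furnishes the bound \eqref{eq5.26a} against the continuous solution $(X_t,Y_t,Z_t,\Gamma_t)$. First I would settle existence of a solution of \eqref{eq5.4}: the Markovian iteration \eqref{eq5.6} converges by Theorem \ref{co5.6}(ii), and its limit $u^{\pi}_{t_n}$ provides the feedback representation $\hat{Y}^{\pi}_{t_n}=u^{\pi}_{t_n}(\hat{X}^{\pi}_{t_n})$ defining the solution, which by Theorem \ref{co5.6}(i) is uniformly Lipschitz ($L(u^{\pi})\le\bar{L}$) and of linear growth. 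Writing $Y_{t_n}=u(t_n,X_{t_n})$ (Theorem \ref{th5.7}(iv)) and applying the triangle inequality, it then suffices to control the scheme-to-scheme error between \eqref{eq5.27a} and \eqref{eq5.4}, since the remaining continuous-to-decoupled part is $O(h^{1-\epsilon})$ by \eqref{eq5.26a}.

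Both \eqref{eq5.27a} and \eqref{eq5.4} are instances of the generic one-step system \eqref{eq5.8}--\eqref{eq5.9}: the decoupled scheme uses the feedback $\varphi^{1}_n=u(t_n,\cdot)$, while the coupled scheme uses $\varphi^{2}_n=u^{\pi}_{t_n}$. I would therefore invoke Lemma \ref{le5.4} with these two choices to obtain, for the differences $\Delta\hat{X}_n,\Delta\hat{Y}_n,\Delta\hat{Z}_n,\Delta\hat{\Gamma}_n$ of the two solutions (with solution $1$ the decoupled scheme $\tilde{\cdot}$ and solution $2$ the coupled scheme $\hat{\cdot}$), the one-step recursions \eqref{eq5.10}--\eqref{eq5.11}, where $L(\varphi^{1})=L(u(t_n,\cdot))\le L_1$ is finite by Theorem \ref{th5.7}(i). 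The only genuinely new term is the forcing $|u(t_n,\hat{X}_n)-u^{\pi}_{t_n}(\hat{X}_n)|^2$ in \eqref{eq5.10}; here $u^{\pi}_{t_n}(\hat{X}_n)=\hat{Y}_n$, and I would split it as $u(t_n,\hat{X}_n)-\hat{Y}_n=[u(t_n,\hat{X}_n)-u(t_n,\tilde{X}_n)]+[u(t_n,\tilde{X}_n)-\tilde{u}^{\pi}_{t_n}(\tilde{X}_n)]+[\tilde{Y}_n-\hat{Y}_n]$. The first bracket is bounded by $L_1|\Delta\hat{X}_n|^2$ via the Lipschitz continuity of $u$, the second by $C[1+|\xi|^2]h$ via the consistency estimate \eqref{eq5.2a}, and the third is exactly $\Delta\hat{Y}_n$; hence the forcing is $\le C(|\Delta\hat{X}_n|^2+|\Delta\hat{Y}_n|^2+[1+|\xi|^2]h)$.

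Substituting this bound into \eqref{eq5.10}--\eqref{eq5.11} produces a coupled pair of discrete inequalities for $(\mathbb{E}|\Delta\hat{X}_n|^2,\mathbb{E}|\Delta\hat{Y}_n|^2)$ in which the backward error feeds the forward recursion and vice versa. The plan is to close this system by a simultaneous discrete Gronwall argument, choosing the free parameters $\lambda_i$ as in \eqref{eq5.16} so that $A_3=A_4=1$ and the cross-coupling coefficients are controlled; this is precisely where Assumption 3 enters, guaranteeing the contraction-type bounds ($L_0\le e^{-1}$, $c_1<1$) that keep the amplification factors below the threshold needed for the $N\approx T/h$ steps to accumulate only an $O(h)$ error, the per-step forcing $C[1+|\xi|^2]h^2$ summing to $C[1+|\xi|^2]h$. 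This yields $\max_n(\mathbb{E}|\Delta\hat{X}_n|^2+\mathbb{E}|\Delta\hat{Y}_n|^2)+\sum_n\Delta t_n(\mathbb{E}|\Delta\hat{Z}_n|^2+\mathbb{E}|\Delta\hat{\Gamma}_n|^2)\le C[1+\mathbb{E}|\xi|^2]h$, and combining it with \eqref{eq5.26a} through the triangle inequality, extending the grid bounds to $\sup_{t_n\le t\le t_{n+1}}$ via the temporal regularity of Theorem \ref{th5.7}(ii) and the path continuity of $(X,Y)$, gives \eqref{eq5.5}. The $\epsilon=0$ refinement is then inherited directly from Theorem \ref{th5.7}(v), since the scheme-to-scheme part is already $O(h)$. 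The main obstacle is the forward--backward coupling in the Gronwall step: the forcing term legitimately depends on the backward error $\Delta\hat{Y}_n$, so the estimate only closes because Assumption 3 renders the coupled iteration contractive, and verifying that the single $\lambda_i$-calibration of Lemma \ref{le5.3} keeps the $X$- and $Y$-recursions simultaneously stable is the delicate point.
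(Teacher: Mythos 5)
Your skeleton coincides with the paper's: existence of a solution of \eqref{eq5.4} via the Markovian iteration, a triangle inequality against the decoupled reference scheme \eqref{eq5.27a} whose error versus the continuous solution is given by \eqref{eq5.26a}, Lemma \ref{le5.4} applied to the two schemes viewed as instances of \eqref{eq5.8}--\eqref{eq5.9}, a Gr\"onwall argument for the forward difference, and the backward recursion with the $\lambda_i$ calibrated so that $A_3,A_4\le \tfrac12$, closed by the terminal identity $|\tilde Y^{\pi}_{t_N}-\hat Y^{\pi}_{t_N}|^2\le C|\tilde X^{\pi}_{t_N}-\hat X^{\pi}_{t_N}|^2$. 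The one genuine divergence is the treatment of the forcing term in \eqref{eq5.10}. The paper bounds $|u(t_n,x)-\hat u(t_n,x)|^2$ outright by $C(1+|\xi|^2)h$ --- in effect invoking the consistency estimate \eqref{eq5.2a} for the decoupling field of the \emph{coupled} scheme \eqref{eq5.4}, although \eqref{eq5.2a} is only stated for $\tilde u^{\pi}$ --- which removes $\Delta\hat Y_n$ from the forward recursion and lets the $X$- and $(Y,Z,\Gamma)$-estimates, \eqref{eq5.18} and \eqref{eq5.19}, be run sequentially. You instead split the forcing into a Lipschitz term, the consistency term for $\tilde u^{\pi}$ (the form in which \eqref{eq5.2a} is actually proved), and the backward error $\Delta\hat Y_n$ itself, which produces a genuinely coupled pair of discrete inequalities. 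What your route buys is that it uses only stated ingredients; what it costs is that the final Gr\"onwall step is no longer sequential, and you leave that step as the ``delicate point'' rather than executing it. It does close --- the contraction structure is the same as in Lemma \ref{le5.8} and Theorem \ref{th5.8}, and the smallness of the product of the forward and backward amplification factors over the $N\approx T/h$ steps is exactly what the weak coupling/monotonicity conditions of Assumption 3 supply --- but to make the argument complete you must display the two cross-coupling coefficients explicitly and verify that their accumulated product stays below one with the same $\lambda_i$ choice that keeps $A_3,A_4\le\tfrac12$; as written this is a plausible plan rather than a finished proof of the key inequality.
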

\begin{proof} Reviewing discrete schemes (\ref{eq5.4}) and (\ref{eq5.27a}), applying Cauchy-Schwarz inequality and (\ref{eq5.26a}), we have
\begin{align}\label{eq5.17}	
	\mathop{\max}\limits_{0\leq n<N}&\mathop{\sup}\limits_{t_{n}\leq t \leq t_{n+1}}(\mathbb E|X_{t}-\hat{X}_{t_{n}}|^2+\mathbb E|Y_{t}-\hat{Y}_{t_{n}}|^2) \nonumber \\
&+\sum_{n=0}^{N-1}\int_{t_{n}}^{t_{n+1}}(\mathbb E|Z_{t}-\hat{Z}_{t_{n}}^{\pi}|^2dt +\mathbb E|\Gamma_{t}-\hat{\Gamma}_{t_{n}}^{\pi}|^2)dt \nonumber \\ \leq&  \mathop{\max}\limits_{0\leq n<N}\mathop{\sup}\limits_{t_{n}\leq t \leq t_{n+1}}(\mathbb E|X_{t}-\tilde{X}_{t_{n}}|^2+\mathbb E|Y_{t}-\tilde{Y}_{t_{n}}|^2)+\sum_{n=0}^{N-1}\int_{t_{n}}^{t_{n+1}}\mathbb E|Z_{t}-\tilde{Z}_{t_{n}}^{\pi}|^2dt \nonumber\\ &+\sum_{n=0}^{N-1}\int_{t_{n}}^{t_{n+1}}\mathbb E|\Gamma_{t}-\tilde{\Gamma}_{t_{n}}^{\pi}|^2dt+ \mathop{\max}\limits_{0\leq n<N}\mathop{\sup}\limits_{t_{n}\leq t \leq t_{n+1}}(\mathbb{E}|\tilde{X}^{\pi}_{t_{n}}-\hat{X}^{\pi}_{t_{n}}|^2+\mathbb{E}|\tilde{Y}^{\pi}_{t_{n}}-\hat{Y}^{\pi}_{t_{n}}|^2)\\&+\sum_{n=0}^{N-1}\int_{t_{n}}^{t_{n+1}}\mathbb{E}|\tilde{Z}^{\pi}_{t}-\hat{Z}_{t_{n}}^{\pi}|^2dt +\sum_{n=0}^{N-1}\int_{t_{n}}^{t_{n+1}}\mathbb{E}|\tilde{\Gamma}^{\pi}_{t}-\hat{\Gamma}_{t_{n}}^{\pi}|^2dt
\nonumber	\\\leq& C(1+|\xi|^2)h^{1-\epsilon}+ \mathop{\max}\limits_{0\leq n<N}\mathop{\sup}\limits_{t_{n}\leq t \leq t_{n+1}}(\mathbb{E}|\tilde{X}^{\pi}_{t_{n}}-\hat{X}^{\pi}_{t_{n}}|^2+\mathbb{E}|\tilde{Y}^{\pi}_{t_{n}}-\hat{Y}^{\pi}_{t_{n}}|^2)
\nonumber \\&+\sum_{n=0}^{N-1}\int_{t_{n}}^{t_{n+1}}\mathbb{E}|\tilde{Z}^{\pi}_{t}-\hat{Z}_{t_{n}}^{\pi}|^2dt +\sum_{n=0}^{N-1}\int_{t_{n}}^{t_{n+1}}\mathbb{E}|\tilde{\Gamma}^{\pi}_{t}-\hat{\Gamma}_{t_{n}}^{\pi}|^2dt.\nonumber
\end{align}
Choose $\lambda_{1}=1$,  we obtain from (\ref{eq5.10}) and (iii) of Theorem \ref{th5.7},
	\begin{eqnarray*}
	\begin{aligned}	
		\mathbb{E}|\tilde{X}^{\pi}_{t_{n+1}}-\hat{X}^{\pi}_{t_{n+1}}|^2&\leq \mathbb{E}\{(1+Ch)|\tilde{X}^{\pi}_{t_{n}}-\hat{X}^{\pi}_{t_{n}}|^2+Ch|u(t_{n},X^{\pi}_{t_{n}})-\hat{u}(t_{n},X^{\pi}_{t_{n}})|^2\}\\&\leq \mathbb{E}\{(1+Ch)|\tilde{X}^{\pi}_{t_{n}}-\hat{X}^{\pi}_{t_{n}}|^2+C(1+|\xi|^2)h\}.
	\end{aligned}
\end{eqnarray*}
Using Growall inequality and $\tilde{X}^{\pi}_{t_{0}}-\hat{X}^{\pi}_{t_{0}}=0$, we have
  	\begin{eqnarray}\label{eq5.18}
  \sum_{0\leq n\leq N}\mathbb{E}\{|\tilde{X}^{\pi}_{t_{n}}-\hat{X}^{\pi}_{t_{n}}|^2\}\leq C(1+|\xi|^2)h.
  \end{eqnarray}
Choose $\lambda_{2}=\lambda_{3}=\lambda_{4}=\frac{1}{5}$ and $h$ small enough so that $ A_{3}\leq \frac{1}{2}$ and $ A_{4}\leq \frac{1}{2}$. Then we obtain
	\begin{eqnarray*}
	\begin{aligned}			
		\mathbb{E}\left \{|\tilde{Y}^{\pi}_{t_{n}}-\hat{Y}^{\pi}_{t_{n}}|^2\right.&\left.+\frac{1}{2}h|\tilde{Z}^{\pi}_{t_{n}}-\hat{Z}^{\pi}_{t_{n}}|^2
+\frac{1}{2}h|\tilde{\Gamma}^{\pi}_{t_{n}}-\hat{\Gamma}^{\pi}_{t_{n}}|^2\right\}\\&\leq \mathbb{E}\left\{(1+Ch)|\tilde{Y}^{\pi}_{t_{n+1}}-\hat{Y}^{\pi}_{t_{n+1}}|^2+Ch|\tilde{X}^{\pi}_{t_{n}}-\hat{X}^{\pi}_{t_{n}}|^2\right\}.
	\end{aligned}
\end{eqnarray*}
	Since
		\begin{eqnarray*}
		|\tilde{Y}^{\pi}_{t_{N}}-\hat{Y}^{\pi}_{t_{N}}|^2=|g(\tilde{X}^{\pi}_{t_{N}})-g(X^{\pi}_{t_{N}})|^2\leq C|\tilde{X}^{\pi}_{t_{N}}-\hat{X}^{\pi}_{t_{N}}|^2,
	\end{eqnarray*}
we can get
	\begin{eqnarray}\label{eq5.19}
\begin{aligned}
\mathop{\sup }\limits_{0\leq n\leq N}\mathbb{E}\{|\tilde{Y}^{\pi}_{t_{n}}-\hat{Y}^{\pi}_{t_{n}}|^2\}+&h\sum_{n=0}^{N-1}\mathbb{E}|\tilde{Z}^{\pi}_{t_{n}}-\hat{Z}^{\pi}_{t_{n}}|^2
+h\sum_{n=0}^{N-1}\mathbb{E}|\tilde{\Gamma}^{\pi}_{t_{n}}-\hat{\Gamma}^{\pi}_{t_{n}}|^2\\&\leq C \mathop{\sup }\limits_{0\leq n\leq N}\mathbb{E}\{|\tilde{X}^{\pi}_{t_{n}}-\hat{X}^{\pi}_{t_{n}}|^2\\
&\leq C(1+|\xi|^2)h.
\end{aligned}
\end{eqnarray}
Substituting (\ref{eq5.18}) and (\ref{eq5.19}) into (\ref{eq5.17}) yields the desired results. This completes the proof.
\end{proof}

We conclude this section with a remark. Theorem \ref{th5.2} allow us to state that the Euler time discretization scheme achieves a rate of convergence of at least $h^{(1-\epsilon)/2}$ for any $\epsilon>0$ under the standard Lipschitz conditions, and the optimal rate $h^{1/2}$ under the additional assumption.
\section{Error estimates for deep learning approximation} In this section, we derive the simulation error of deep learning method. We first consider algorithm (\ref{eq5.4}) without the terminal condition of $\hat{Y}^\pi_T$. This implies the system has infinitely many solutions, as the case of \eqref{eq5.8}-\eqref{eq5.9}. We have the following estimates whose proof is based on Lemma \ref{le5.4} as well.

 \begin{lemma}\label{le5.8}
 	For sufficiently small $h$, for any $\lambda_{7} >0$ and $\lambda_{8} >\max\{f_{z},\frac{f_{\Gamma}}{K_{\gamma}^2}\}$, let
 	\begin{align*}
 		B_{1}:=&(\lambda_{7}^{-1}+h)b_{y}+\sigma_{y}+\beta_{y},\quad			B_{2}:=-\frac{\ln[1-(1+f_x+\lambda_{8})h]}{ h } ,\\ 			B_{3}:=&\frac{f_{x}}{[1-(1+f_x+\lambda_{8})h]\lambda_{8}}.  	
 		\end{align*}
Suppose equation (\ref{eq5.4}) without the terminal condition of $\hat{Y}^\pi_T$ has two solutions $(\hat{X}^{\pi,j}_{t_{n}}$, $\hat{Y}^{\pi,j}_{t_{n}}$, $\hat{Z}^{\pi,j}_{t_{n}}$, $\hat{\Gamma}^{\pi,j}_{t_{n}})$, $j=1,2$. Let $\Delta X_{t_{n}}=\hat{X}_{t_{n}}^{\pi,1}-\hat{X}_{t_{n}}^{\pi,2}$, $\Delta Y_{t_{n}}=\hat{Y}_{t_{n}}^{\pi,1}-\hat{Y}_{t_{n}}^{\pi,2}$. Then we have
 	\begin{eqnarray}\label{eq3.43}
 \begin{array}{l}	
 \mathbb{E}|\Delta X_{t_{n}}|^2\leq  B_{1} \sum\limits_{i=0}^{n-1} e^{(A_{1}+\lambda_{7})(n-1-i)h}\mathbb{E}|\Delta Y_{t_{i}}|^2h,
 \end{array}
 \end{eqnarray}
 \begin{eqnarray}\label{eq3.44}
 \begin{array}{l}	
 \mathbb{E}|\Delta Y_{t_{n}}|^2\leq e^{B_{2}(N-n)h}|\Delta Y_{t_{N}}|^2+ B_{3} \sum\limits_{i=n}^{N-1}e^{B_{2}(i-n)h}\mathbb{E}|\Delta X_{t_{i}}|^2h.	
 \end{array}
 \end{eqnarray}
 \end{lemma}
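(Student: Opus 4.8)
The plan is to prove the two bounds separately, each by subtracting the two solutions of \eqref{eq5.4} (taken without a prescribed terminal $\hat Y^\pi_T$) and reducing to the one-step estimates already contained in Lemma \ref{le5.4}, after which a discrete Gr\"onwall argument --- run forward in time for \eqref{eq3.43} and backward in time for \eqref{eq3.44} --- closes each recursion. Throughout I write $\Delta X_{t_n}$, $\Delta Y_{t_n}$ for the differences of the two solutions; since both start from the same $\xi$ we have $\Delta X_{t_0}=0$.

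For \eqref{eq3.43} I would subtract the two forward Euler recursions, square, and take $\mathbb E_{t_n}$. Because $\Delta W_{t_n}$ and $\tilde\mu(de,(t_n,t_{n+1}])$ are $\mathcal F_{t_n}$-independent martingale increments, the cross terms drop out and only the It\^o-isometry second moments of $\Delta\sigma_n$ and $\Delta\beta_n$ remain, exactly as in the chain \eqref{eq4.13}. Invoking the monotonicity of $b$ in $x$ from (i) of Assumption 1 on the drift inner product, the Lipschitz bounds of (ii) of Assumption 1 on the remaining terms, and a Young inequality with parameter $\lambda_7$ to peel off the drift's $y$-dependence, and noting that here the backward variable enters the coefficients directly so that the relevant ``policy difference'' is $\Delta Y_{t_n}$ itself, gives a one-step bound of the form
\[
\mathbb E_{t_n}|\Delta X_{t_{n+1}}|^2 \le \bigl(1+(A_1+\lambda_7)h\bigr)|\Delta X_{t_n}|^2 + B_1 h\,|\Delta Y_{t_n}|^2 .
\]
Taking full expectations, using $\Delta X_{t_0}=0$, and unrolling with $(1+(A_1+\lambda_7)h)^{k}\le e^{(A_1+\lambda_7)kh}$ produces exactly \eqref{eq3.43}.

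For \eqref{eq3.44} the starting point is inequality \eqref{eq5.11}, applied to the difference of the two backward solutions. I would first fix $\lambda_2,\dots,\lambda_5$ so that condition \eqref{eq4.10} holds, i.e.\ $A_3\le1$ and $A_4\le1$; then the terms $(1-A_3)\Delta t_n|\Delta\hat Z_n|^2$ and $\frac1{K_\gamma^2}(1-A_4)\Delta t_n|\Delta\hat\Gamma_n|^2$ on the left of \eqref{eq5.11} are nonnegative and may be discarded, where the bound \eqref{eq2.2} on $\gamma$ is what makes the $\hat\Gamma$-term controllable. What is left couples $|\Delta Y_{t_n}|^2$ to $\mathbb E_{t_n}|\Delta Y_{t_{n+1}}|^2$ and $|\Delta X_{t_n}|^2$; using a Young inequality with the parameter $\lambda_8>\max\{f_z,f_\Gamma/K_\gamma^2\}$ to absorb the $z$- and $\Gamma$-contributions of $f$ into the coefficient of $|\Delta Y|^2$, I would rearrange this into the implicit form
\[
\bigl[1-(1+f_x+\lambda_8)h\bigr]\,|\Delta Y_{t_n}|^2 \le \mathbb E_{t_n}|\Delta Y_{t_{n+1}}|^2 + \tfrac{f_x}{\lambda_8}\,h\,|\Delta X_{t_n}|^2 .
\]
Dividing through (the bracket is positive for small $h$), taking full expectations, and unrolling backward from $n=N$ --- so that one step contributes the factor $[1-(1+f_x+\lambda_8)h]^{-1}=e^{B_2 h}$ with $B_2=-h^{-1}\ln[1-(1+f_x+\lambda_8)h]$ and the forcing coefficient becomes $B_3$ --- yields \eqref{eq3.44}.

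The main obstacle is the backward step, and within it the elimination of the auxiliary variables $\Delta\hat Z_n$ and $\Delta\hat\Gamma_n$. Unlike $\Delta X$ and $\Delta Y$ these are not free: by their definitions they are conditional expectations of $\hat Y^\pi_{t_{n+1}}$ tested against $\Delta W_{t_n}$ and $\int_E\gamma\,\tilde\mu$, so their differences are slaved to $\Delta Y_{t_{n+1}}$ and must be removed rather than propagated. The delicate point is to arrange the Young splittings so that the $f_z$ and $f_\Gamma$ contributions are exactly absorbed by the discarded nonnegative $(1-A_3)$, $(1-A_4)$ terms, leaving only the forward coupling $f_x$ in $B_3$ and the clean coefficient $1+f_x+\lambda_8$ in $B_2$; this is precisely why the threshold $\lambda_8>\max\{f_z,f_\Gamma/K_\gamma^2\}$ is imposed, since it keeps the prefactor $1-(1+f_x+\lambda_8)h$ positive while permitting the absorption. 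Matching the remaining constants to the stated $B_1,B_2,B_3$ is then bookkeeping, but keeping the Young parameters consistent across the two coupled recursions is the step that needs care.
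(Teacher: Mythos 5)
Your proposal is correct and follows essentially the same route as the paper: a forward one-step estimate plus discrete Gr\"onwall for \eqref{eq3.43}, and a backward one-step estimate in the implicit form $[1-(1+f_x+\lambda_8)h]\,|\Delta Y_{t_n}|^2\le \mathbb{E}_{t_n}|\Delta Y_{t_{n+1}}|^2+f_x\lambda_8^{-1}h\,|\Delta X_{t_n}|^2$ --- obtained by discarding the nonnegative $Z$- and $\Gamma$-terms, which is exactly what $\lambda_8>\max\{f_z,f_\Gamma/K_\gamma^2\}$ together with \eqref{eq2.2} guarantees --- followed by backward induction for \eqref{eq3.44}. The only cosmetic difference is that the paper re-derives this backward one-step bound from the martingale representation theorem rather than quoting \eqref{eq5.11}, since \eqref{eq5.11} as stated places the Young penalty on $\mathbb{E}_{t_n}|\Delta \hat Y_{n+1}|^2$ and must be re-run with the penalty on $|\Delta Y_{t_n}|^2$ to produce the stated constants $B_2,B_3$; this is in substance the rearrangement you describe.
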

 \begin{proof} Similar to (\ref{eq4.13}), it follows that
 	\begin{eqnarray*}
 	\begin{aligned}
 		\mathbb{E}\{|\Delta X_{t_{n+1}}|^2\} \leq [1+(A_{1}+\lambda_{7})]h\mathbb{E}\{|\Delta X_{t_{n}}|^2\}+B_{1}h\mathbb{E}\{|\Delta Y_{t_{n}}|^2\}.
 	\end{aligned}
 \end{eqnarray*}
Using discrete Gr\"onwall inequality yields the estimate (\ref{eq3.43}).

By the martingale representation theorem (see, for example, \cite{Situ05}), there exists an $\mathbb{F}$-adapted square integrable process $\{ \Delta Z_{t} \}_{t_{i}\leq t\leq t_{i+1}}$, $\{\Delta U_{t}(e)\}_{t_{i}\leq t\leq t_{i+1}}$ such that
   	\begin{eqnarray*}
   	\begin{aligned}
   		\Delta Y_{t_{n+1}}=\mathbb{E}_{t_{n}}\{\Delta Y_{t_{n+1}}\}+\int_{t_{n}}^{t_{n+1}}
   		(\Delta Z_{t})^\mathsf{T}dW_{t}+\int_{t_{n}}^{t_{n+1}}
   		\int_{E}\Delta U_{t}(e)\tilde{\mu}(de,(t_{n},t_{n+1}]).
   	\end{aligned}
   \end{eqnarray*}
Then, similar to (\ref{eq5.012}), we have
 	\begin{align}\label{eq5.32}
 		\mathbb{E}\{|\Delta Y_{t_{n+1}}|^2\}
\geq &\mathbb{E}|\Delta Y_{t_{n}}|^2+\int_{t_{n}}^{t_{n+1}}\mathbb{E}|\Delta Z_{t}|^2dt+\int_{t_{n}}^{t_{n+1}}
 		\int_{E}\mathbb{E}|\Delta U(e)|^2\lambda(de)dt
 		\nonumber\\&-{1+f_x}\Delta t_{n}\mathbb{E}\{|\Delta Y_{t_{n}}|^2\}-[\lambda_{8}\mathbb{E}\{|\Delta Y_{t_{n}}|^2\}+\lambda_{8}^{-1}(f_{x}\mathbb{E}\{|\Delta X_{t_{n}}|^2\}\nonumber\\
 &+f_{z}\mathbb{E}\{|\Delta Z_{n}|^2\}+f_{\Gamma}\mathbb{E}\{| \Delta \Gamma_{n}|^2)]\Delta t_{n}\}.
  	\end{align}
Substituting (\ref{eq5.16a}) and
	\begin{eqnarray*}
	\mathbb{E}\left\{\int_{t_{n}}^{t_{n+1}}|\Delta Z_{t}|^2dt\right\} \geq \frac{1}{ \Delta {t_{n}}}\mathbb{E}\left\{\left|\int_{t_{n}}^{t_{n+1}}\Delta Z_{t}dt\right|^2\right\}
\end{eqnarray*}
into (\ref{eq5.32}), we have
 	\begin{eqnarray}\label{eq5.21}
 \begin{aligned}
 \mathbb{E}\{|\Delta Y_{t_{n+1}}|^2\}\geq &[1-(1+f_x+\lambda_{8})\Delta{t_{n}}] \mathbb{E}\{|\Delta Y_{t_{n}}|^2\}+(1-f_{z}\lambda_{8}^{-1})\Delta{t_{n}}\mathbb{E}\{|\Delta Z_{n}dt|^2\}\\&+(\frac{1}{K_\gamma^{2}}-f_{\Gamma}\lambda_{8}^{-1})\Delta{t_{n}}\mathbb{E}\{|\Delta \Gamma_{n}|^2\}-f_{x}\lambda_{8}^{-1}\Delta{t_{n}}\mathbb{E}\{|\Delta X_{t_{n}}|^2\}.
 \end{aligned}
 \end{eqnarray}
 For any $\lambda_{8}>\max\{f_{z},\frac{f_{\Gamma}}{K_{\gamma}^2}\}$, and sufficiently small $h$ satisfying $(2k_{f}+\lambda_{8})h<1$, we then have
 	\begin{eqnarray*}\label{eq5.210}
 	\begin{aligned}
 		\mathbb{E}|\Delta Y_{t_{n}}|^2\leq [1-(1+f_{x}&+\lambda_{8})\Delta{t_{n}}]^{-1} (\mathbb{E}|\Delta Y_{t_{n+1}}|^2+f_{x}\lambda_{8}^{-1}\Delta{t_{n}}\mathbb{E}|\Delta X_{t_{n}}|^2).
 	\end{aligned}
 \end{eqnarray*}
 By induction, we obtain (\ref{eq3.44}) and therefore complete the proof of the theorem. 	
 \end{proof}

 Now we are ready to bound the simulation error of deep learning algorithm.
 \begin{theorem}\label{th5.8}
 Suppose Assumptions 1, 2, 3 and 4, hold true and there exist $\lambda_7>0$ and $\lambda_{8} \ge \max\{f_{z},\frac{f_{\Gamma}}{K_{\gamma}^2}\}$ such that $A_0<1$ with
 $$A_0=(b_{y}\lambda_{7}^{-1}+\sigma_{y}+\beta_{y})\frac{1-e^{-B_{4}T}}{B_{4}}\left[g_{x}(1+\lambda_{9})e^{B_{4}T}
 +f_{x}\lambda_{8}^{-1}\frac{e^{B_{4}T}-1}{B_{4}}\right],$$
  where
	\begin{eqnarray*}
	B_{4}=1+2k_1+\sigma_{x}+\beta_{x}+f_{x}+\lambda_1+\lambda_{7}+\lambda_{8}.
\end{eqnarray*}
If ($X^{\pi}_{t_{n}}$, $Y^{\pi}_{t_{n}}$, $Z^{\pi}_{t_{n}}$, $\Gamma^{\pi}_{t_{n}}$) is a solution of equation \eqref{eq3.16} and ($\hat{X}^{\pi}_{t_{n}}$, $\hat{Y}^{\pi}_{t_{n}}$, $\hat{Z}^{\pi}_{t_{n}}$,$\hat{\Gamma}^{\pi}_{t_{n}}$) is a solution of equation \eqref{eq5.4}, then we have
 	\begin{eqnarray*}
 	\begin{aligned}
 		\mathop{\sup}\limits_{0 \leq n \leq N}&(\mathbb{E}|X^{\pi}_{t_{n}}-\hat{X}_{t_{n}}^{\pi}|^2+\mathbb{E}|Y^{\pi}_{t_{n}}-\hat{Y}_{t_{n}}^{\pi}|^2)\\&+\ \sum_{n=0}^{N-1}\mathbb{E}|Z_{t_{n}}^{\pi}-\hat{Z}_{t_{n}}^{\pi}|^2 \Delta t_{n} +\sum_{n=0}^{N-1}\mathbb{E}|\Gamma_{t_{n}}^{\pi}-\hat{\Gamma}_{t_{n}}^{\pi}|^2\Delta t_{n}           \leq C\mathbb{E}|g(X_{T}^{\pi})-Y_{T}^{\pi}|^2.
 	\end{aligned}
 \end{eqnarray*} 	
 \end{theorem}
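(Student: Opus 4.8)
The plan is to reduce the claim to Lemma \ref{le5.8} by first recognizing that the deep-learning output of \eqref{eq3.16} is itself a solution of \eqref{eq5.4} with the terminal condition left unspecified. Indeed, projecting the last line of \eqref{eq3.16} onto $\mathcal F_{t_n}$ — i.e. taking $\mathbb{E}_{t_n}\{\cdot\}$, then multiplying by $(\Delta W_{t_n})^{\mathsf T}$ and by $\int_E\gamma(e)\tilde{\mu}(de,(t_n,t_{n+1}])$ and projecting again — and using that $Z_{t_n}^\pi,U_{t_n}^\pi$ are $\mathcal F_{t_n}$-measurable together with the independence of $W$ and $\mu$, the cross terms decouple through the orthogonality of the Brownian and compensated-Poisson increments. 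This is exactly the derivation of the identities \eqref{eq5.1}--\eqref{eq5.3}, so the triple $(X^\pi,Y^\pi,Z^\pi,\Gamma^\pi)$ satisfies the $Z$-, $\Gamma$- and $Y$-relations of \eqref{eq5.4}; the only difference from the conditional-expectation solution $(\hat X^\pi,\hat Y^\pi,\hat Z^\pi,\hat\Gamma^\pi)$ is that its terminal value is $Y_T^\pi$ rather than $g(\hat X_T^\pi)$. Hence both triples are solutions of \eqref{eq5.4} without the terminal condition, and Lemma \ref{le5.8} applies to their difference.

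Setting $\Delta X_{t_n}=X^\pi_{t_n}-\hat X^\pi_{t_n}$ and $\Delta Y_{t_n}=Y^\pi_{t_n}-\hat Y^\pi_{t_n}$, Lemma \ref{le5.8} supplies the two coupled estimates \eqref{eq3.43} and \eqref{eq3.44}. The terminal mismatch is $\Delta Y_{t_N}=Y_T^\pi-g(\hat X_T^\pi)$, which I would write as $(Y_T^\pi-g(X_T^\pi))+(g(X_T^\pi)-g(\hat X_T^\pi))$. Young's inequality with parameter $\lambda_9$, together with the $g_x$-Lipschitz bound on $g$ from Assumption 1, then gives $\mathbb{E}|\Delta Y_{t_N}|^2\le (1+\lambda_9)g_x\,\mathbb{E}|\Delta X_{t_N}|^2+(1+\lambda_9^{-1})\,\mathbb{E}|g(X_T^\pi)-Y_T^\pi|^2$. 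The second term is precisely the objective value on the right-hand side of the theorem, while the first re-enters the coupling through $\Delta X_{t_N}$.

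The core of the argument is to close the $\Delta Y\to\Delta X\to\Delta Y$ loop. Substituting \eqref{eq3.43} into \eqref{eq3.44} and into the terminal bound yields a single inequality for $\sup_n\mathbb{E}|\Delta Y_{t_n}|^2$ whose self-coupling coefficient is a nested discrete convolution of the two exponential kernels $e^{(A_1+\lambda_7)(\cdot)h}$ and $e^{B_2(\cdot)h}$. As $h\to0$ the Riemann sums converge to integrals whose rates add, since $(A_1+\lambda_7)+B_2\to B_4$, while $B_1\to b_y\lambda_7^{-1}+\sigma_y+\beta_y$ and $B_3\to f_x\lambda_8^{-1}$; the integrated kernels collapse to the factors $\tfrac{1-e^{-B_4T}}{B_4}$, $e^{B_4T}$ and $\tfrac{e^{B_4T}-1}{B_4}$. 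Collecting the contributions, the self-coupling constant tends to exactly $A_0$, so for $h$ small it stays below $1$ whenever $A_0<1$; absorbing the feedback term then gives $\sup_n\mathbb{E}|\Delta Y_{t_n}|^2\le C\,\mathbb{E}|g(X_T^\pi)-Y_T^\pi|^2$ with $C$ depending on $(1-A_0)^{-1}$. Re-inserting this into \eqref{eq3.43} controls $\sup_n\mathbb{E}|\Delta X_{t_n}|^2$, and finally $\Delta Z_{t_n}$ and $\Delta\Gamma_{t_n}$, which by the first step are conditional expectations of $\Delta Y$, are bounded through the $Z$- and $\Gamma$-parts of \eqref{eq5.11} in Lemma \ref{le5.4}, summed in $n$ against $\Delta t_n$, in terms of the already-controlled $\Delta X$ and $\Delta Y$. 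Combining the three contributions yields the stated bound.

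The main obstacle is making the passage from the nested discrete sums to the combined rate $B_4$ rigorous and uniform in $h$, and verifying that the limiting self-coupling constant is \emph{exactly} $A_0$ and not something larger, so that the hypothesis $A_0<1$ genuinely produces a contraction that can absorb the terminal $\Delta X_{t_N}$ feedback. A secondary technical point is the joint choice of the free parameters $\lambda_1,\lambda_7,\lambda_8,\lambda_9$: they must keep $B_2,B_3$ finite (that is, $1-(1+f_x+\lambda_8)h>0$) with $\lambda_8\ge\max\{f_z,f_\Gamma/K_\gamma^2\}$, and simultaneously render $A_0<1$, which is exactly where one of the alternatives of Assumption 3 (weak coupling, strong monotonicity, or small $T$) must be invoked to make the product of coupling constants small.
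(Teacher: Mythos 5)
Your proposal is correct and follows essentially the same route as the paper: both reduce the claim to Lemma \ref{le5.8} applied to the two quadruples viewed as solutions of \eqref{eq5.4} without terminal condition, split the terminal mismatch $Y_T^{\pi}-g(\hat X_T^{\pi})$ by Young's inequality with parameter $\lambda_9$, close the $\Delta X\leftrightarrow\Delta Y$ loop through a nested discrete convolution whose self-coupling coefficient tends to $A_0$ as $h\to 0$ (the paper's quantity $A(h)$, handled via the exponentially weighted maxima $P$ and $S$), and then control $\sum_n\Delta t_n\,\mathbb{E}|\Delta Z_{t_n}|^2$ and $\sum_n\Delta t_n\,\mathbb{E}|\Delta\Gamma_{t_n}|^2$ by summing the one-step backward inequality with a suitable choice of $\lambda_8$. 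The only differences are presentational (your explicit verification that the scheme \eqref{eq3.16} satisfies the conditional-expectation relations \eqref{eq5.1}--\eqref{eq5.3}, which the paper leaves implicit, and your citing \eqref{eq5.11} rather than the equivalent intermediate inequality \eqref{eq5.21} for the $Z$- and $\Gamma$-parts).
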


\begin{proof}Let
$\hat{X}_{t_{n}}^{\pi,1}=X_{t_{n}}^{\pi}$, $\hat{Y}_{t_{n}}^{\pi,1}=Y_{t_{n}}^{\pi}$, $\hat{Z}_{t_{n}}^{\pi,1}=Z_{t_{n}}^{\pi}$, $\hat{\Gamma}_{t_{n}}^{\pi,1}=\Gamma_{t_{n}}^{\pi}$, $\hat{X}_{t_{n}}^{\pi,2}=\hat{X}_{t_{n}}^{\pi}$, $\hat{Y}_{t_{n}}^{\pi,2}=\hat{Y}_{t_{n}}^{\pi}$, $\hat{Z}_{t_{n}}^{\pi,2}=\hat{Z}_{t_{n}}^{\pi}$ and  $\hat{\Gamma}_{t_{n}}^{\pi,2}=\hat{\Gamma}_{t_{n}}^{\pi}$. Then using Lemma \ref{le5.8} we can bound the difference between ($X^{\pi}_{t_{n}}$, $Y^{\pi}_{t_{n}}$, $Z^{\pi}_{t_{n}}$, $\Gamma^{\pi}_{t_{n}}$) and ($\hat{X}^{\pi}_{t_{n}}$, $\hat{Y}^{\pi}_{t_{n}}$, $\hat{Z}^{\pi}_{t_{n}}$, $\hat{\Gamma}^{\pi}_{t_{n}}$) by the objective function $\mathbb E|g(X_{T}^{\pi})-Y_{T}^{\pi}|^2$.

To begin with, for any $\lambda_{9}>0$, we set
\begin{eqnarray*}\label{eq3.48}
\begin{aligned}
P:=&\mathop{\max}\limits_{n \in [0,N]}e^{-(A_{1}+\lambda_{7})nh}\mathbb{E}|\Delta X_{t_{n}}|^2,\quad S:=\mathop{\max}\limits_{n \in [0,N]}e^{B_{2}nh}\mathbb{E}|\Delta Y_{t_{n}}|^2,\\
A(h):=&B_{1}he^{-(A_{1}+\lambda_{7})h}\frac{e^{-(A_{1}+\lambda_{7}+B_{2})T}-1}{e^{-(A_{1}+\lambda_{7}+B_{2})h}-1}\\& \times\left[g_{x}(1+\lambda_{9})e^{(A_{1}+\lambda_{7}+B_{2})T}+B_{3}h\frac{e^{(A_{1}+\lambda_{7}+B_{2})T}-1}{e^{(A_{1}+\lambda_{7}+B_{2})h}-1}\right].
	\end{aligned}
\end{eqnarray*}
Then when $A(h)<1$, applying Lemma \ref{le5.8} yields
	\begin{eqnarray*}
	\begin{aligned}
		P\leq [1-A(h)]^{-1}e^{B_{2}T}(1+\lambda_{9}^{-1})B_{1}he^{-(A_{1}+\lambda_{7})h} \frac{e^{-(A_{1}+\lambda_{7}+B_{2})T}-1}{e^{-(A_{1}+\lambda_{7}+B_{2})h}-1}\mathbb{E}|g(X_{T}^{\pi})-Y_{T}^{\pi}|^2,
	\end{aligned}
\end{eqnarray*}
and
\begin{eqnarray*}
	\begin{aligned}
		S\leq [1-A(h)]^{-1}e^{B_{2}T}(1+\lambda_{9}^{-1})\mathbb{E}|g(X_{T}^{\pi})-Y_{T}^{\pi}|^2.
	\end{aligned}
\end{eqnarray*}
Noting $\mathop{\lim}\limits_{h \to 0}A(h)=A_0$, we have
\begin{eqnarray*}
	\begin{aligned}
		\mathop{\lim}\limits_{h \to 0}P\leq &2[1-A_0]^{-1}e^{(1+f_{x}+\lambda_{8})T}\\&\times(1+\lambda_{9}^{-1})(b_{y}\lambda_{7}^{-1}+\sigma_{y}+\beta_{y}) \frac{1-e^{-B_{4}T}}{B_{4}}\mathbb{E}|g(X_{T}^{\pi})-Y_{T}^{\pi}|^2
	\end{aligned}
\end{eqnarray*}
and
\begin{eqnarray*}
	\begin{aligned}
		\mathop{\lim}\limits_{h \to 0} S\leq 2[1-A_0]^{-1}e^{(1+f_{x}+\lambda_{8})T}(1+\lambda_{9}^{-1})\mathbb{E}|g(X_{T}^{\pi})-Y_{T}^{\pi}|^2.
	\end{aligned}
\end{eqnarray*}
We then obtain our error estimates of $\mathop{\max}\limits_{n \in [0,N]}\mathbb{E}|\Delta X_{t_{n}}|^2$ and $\mathop{\max}\limits_{n \in [0,N]}\mathbb{E}|\Delta Y_{t_{n}}|^2$ as
\begin{align}\label{eq5.22}
\mathop{\max}\limits_{n \in [0,N]}\mathbb{E}|\Delta X_{t_{n}}|^2\leq C(\lambda_{7},\lambda_{8})\mathbb{E}|g(X_{T}^{\pi})-Y_{T}^{\pi}|^2,\\
\label{eq5.23}
\mathop{\max}\limits_{n \in [0,N]}\mathbb{E}|\Delta Y_{t_{n}}|^2\leq C(\lambda_{7},\lambda_{8})\mathbb E|g(X_{T}^{\pi})-Y_{T}^{\pi}|^2.
\end{align}
To estimate $\mathbb{E}|\Delta Z_{n}|^2$ and $\mathbb{E}|\Delta \Gamma_{n}|^2$, for $\max\{f_{z}, f_{\Gamma}\}\not= 0$, we choose $\lambda_{8}=2\max\{f_{z}, \frac{f_{\Gamma}}{K_{\gamma}^2}\}$ in (\ref{eq5.21}) to get
		\begin{align*}
	\frac{1}{2}\Delta{t_{n}}\mathbb{E}|\Delta Z_{n}|^2+&\frac{1}{2}\Delta{t_{n}}\mathbb{E}|\Delta \Gamma_{n}|^2\\\leq& (1-f_{z}\lambda_{8}^{-1})\Delta{t_{n}}\mathbb{E}|\Delta Z_{t_{n}}|^2+\left(\frac{1}{K_{\gamma}^2}-f_{\Gamma}\lambda_{8}^{-1}\right)\Delta{t_{n}}\mathbb{E}|\Delta \Gamma_{t_{n}}|^2
	\\ \leq& \frac{f_{x}}{2\max\{f_{z},\frac{f_{\Gamma}}{K_{\gamma}^2}\}}\mathbb{E}|\Delta X_{t_{n}}|^2\Delta t_{n}+\mathbb{E}|\Delta Y_{t_{n+1}}|^2\\&-\left[1-\left(1+f_{x}+2\max\left\{f_{z},\frac{f_{\Gamma}}{K_{\gamma}^2}\right\}\right)\Delta{t_{n}}\right] \mathbb{E}|\Delta Y_{t_{n}}|^2,
	\end{align*}
which further implies
	\begin{eqnarray}\label{eq5.24}
\begin{aligned}
\sum_{n=0}^{N-1}\Delta{t_{n}}\mathbb{E}|\Delta Z_{n}|^2+&\sum_{n=0}^{N-1}\Delta{t_{n}}\mathbb{E}|\Delta \Gamma_{n}|^2 \\\leq& \frac{f_{x}T}{\max\{f_{z},f_{\Gamma}\}}\mathop{\max}\limits_{n \in [0,N]}\mathbb{E}|\Delta X_{t_{n}}|^2\\&+[2+(2+2f_x+4\max\{f_{z},f_{\Gamma}\})T\vee 0] \mathop{\max}\limits_{n \in [0,N]}\mathbb{E}|\Delta Y_{t_{n}}|^2
\\\leq& C(\lambda_{7},\lambda_{8})\mathbb{E}|g(X_{T}^{\pi})-Y_{T}^{\pi}|^2.
\end{aligned}
\end{eqnarray}
Note the estimate (\ref{eq5.24}) is trivial for the case of $\max\{f_{z}, f_{\Gamma}\}= 0$. Then combined ({\ref{eq5.22}}), ({\ref{eq5.23}}) and ({\ref{eq5.24}}) leads to the desired results. This completes the proof.
\end{proof}

Finally, combining with Theorems \ref{th5.2} and \ref{th5.8}, we obtain our main Theorem \ref{th5.1}. It is essential to bear in mind that there must exist $\lambda_7$ and $\lambda_8$ satisfying the conditions in Theorem \ref{th5.8}, provided any of the weak coupling and monotonicity conditions introduced in Assumption 3 holds to a sufficient extent.

\section{Numerical Experiments}In this section, we will use two numerical examples to illustrate the effectiveness of the deep learning-based algorithms.

 \subsection{One-dimensional problem}
We first consider a one-dimensional problem (see example 1 of \cite{ZHAO17}). Let $
  g(T,x)=\sin(X_{T}+T)+2$ and
  \begin{eqnarray}\label{eq4.1}
  b(t,X_{t})=0,~~ \sigma(t,X_{t})=1,~~ \beta(X_{t^{-}},e)=e, ~~\delta=1,~~ d=1,\\
  \label{eq4.2}
   \begin{aligned}
f(t,x,u,\sigma^\mathsf{T}\nabla_{x}u,B[u])=&\frac{(u-2)\exp(u)}{2\exp(\sin(x+t)+2)}-\frac{u\nabla_{x}u}{\sin(x+t)+2}\\
&-\int_{E}(u(t,x+e)-u(t,x))\lambda(de),
   \end{aligned}
  \end{eqnarray}
such that the exact solution of  (\ref{eq2.1}) is $u(t,x)=\sin(x+t)+2$. The compensated Poisson random measure:
  \begin{eqnarray*}
    \lambda(de) =\lambda\rho(e)de:=\mathcal{X}_{\left[-\delta,\delta \right]}(e)de,
  \end{eqnarray*}
where $\mathcal{X}_{\left[-\delta,\delta \right]}$ is the characteristic function of the interval $\left[-\delta,\delta \right]$, $\lambda=2\delta$ is the jump intensity and $\rho(e)=\frac{1}{2\delta}\mathcal{X}_{\left[-\delta,\delta \right]}(e)$ is the density function of a uniform distribution on $\left[-\delta,\delta \right]$. Then the FBSDEJ corresponding to (\ref{eq4.1})-(\ref{eq4.2}) is
\begin{eqnarray*}
\left\{
\begin{aligned}
	dX_{t}=& dW_{t}+\int_{E}e\tilde{\mu}(de,dt),\\
    -dY_{t}=&\left\{\frac{(Y_{t}-2)\exp(Y_{t})}{2\exp[\sin(X_{t}+t)+2]}-\frac{Y_{t}Z_{t}}{\sin(X_{t}+t)+2}
	-\Gamma_{t}\right\}dt\\&- Z_{t}dW_{t}-\int_{E}U_{t}(e)\tilde{\mu}(de,dt).
\end{aligned}
\right.
\end{eqnarray*}

Now, let us set $N=20$ and set $2$ hidden layers, both of which are $1+10$ dimensional. Input layer and output layer are chosen as $1$-dimensional. Table 4.1 depicts average value of $u(0,X_{0})$ and standard deviation of $u(0,X_{0})$ based on $256$ Monte Carlo samples and $5$ independent runs. From Table 4.1, we observe that we can obtain a good approximation of $u(0,X_{0})$ by using the deep learning-based algorithm.
\begin{table}[H]
	\centering
	\caption{ Estimate of $u(0,X_{0})$ where $X_{0}=0$, $d=1$.}
	\begin{tabularx}{22em}
		{|*{4}{>{\centering\arraybackslash}X|}}
		\hline
		 & Averaged value & Standard deviation & Loss function \\\hline
		   0 & 1.63119     & 0.18441            & 0.81499 \\\hline
		1000 & 1.91521     & 0.04239            & 0.17286 \\\hline
		2000 & 1.96693     & 0.01654            & 0.14002\\\hline
		3000 & 1.98162     & 0.00939            & 0.13258\\\hline
		4000 & 1.99324     & 0.00639            & 0.12275 \\\hline
	\end{tabularx}

\end{table}
To further demonstrate the effectiveness of this algorithm for decoupled FBSDEJs, the relative $L^{1}$-approximation error of  $u(0,X_{0})$ and mean of the loss function are presented in Fig. 4.1. It is observed from Fig. 4.1 that the relative $L^{1}$-approximation error of $u(0,X_{0})$ and mean of the loss function drop significantly as the number of iteration steps increase from $0$ to $1500$, but is extremely slow as the number of iteration steps increase from $1500$ to $4000$.
\begin{figure}[H]
		\centering
		\subfigure[Relative $L^{1}$-approximation error]{\includegraphics[scale=0.41]{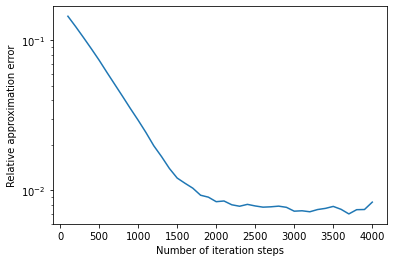} \label{1}}
		\quad
		\subfigure[Mean of the loss function]{\includegraphics[scale=0.42]{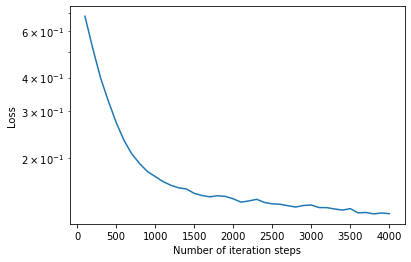} \label{2}}
		\caption{Relative  $L^{1}$-approximation error of  $u(0,X_{0})$ and mean of the loss function.}
		
	\end{figure}


 \subsection{High-dimensional problems} In the second example, we consider a high-dimensional problems with
\begin{eqnarray}\label{eq4.3}
b(t,X_{t})=0,~~\sigma(t,X_{t})=\frac{1}{\sqrt
	d}I_{d},~~\beta(X_{t^{-}},e)=e,~~\delta=1,~~d=100,\\
\begin{aligned}\label{eq4.4}
f(t,x,u,\sigma^\mathsf{T}\nabla_{x}u,B[u])=&\frac{(u-2)\exp(u)}{2\exp(\sin(\bar{x}+t)+2)}-\frac{u(I_{d}  \nabla_{x}u)}{\sin(\bar{x}+t)+2}\\
&-\int_{E}(u(t,\bar{x}+e)-u(t,\bar{x}))\lambda(de),
\end{aligned}
\end{eqnarray}
where $\bar{x}=\sum_{i=1}^{d}x_i$. We choose $g(T,x)=\sin(X_{T}+T)+2$ such that the exact solution of the associated PIDEs is $ u(t,x)=\sin(\bar{x}+t)+2 $. The compensated Poisson random measure is the same as in the first example:
\begin{eqnarray*}
\lambda(de) =\lambda\rho(e)de:=\mathcal{X}_{\left[-\delta,\delta \right]}(e)de.
\end{eqnarray*}
Similarly, we can obtain the corresponding FBSDEJs to (\ref{eq4.3})-(\ref{eq4.4}).
Let us still set $N=20$ and $2$ hidden layers. Both of hidden layers are $100+10$ dimensional, input layer is $100$-dimensional, and output layer is $1$-dimensional.  Table 4.2 depicts average value of $u(0,X_{0})$ and standard deviation of $u(0,X_{0})$ based on $256$ Monte Carlo samples and $5$ independent runs. From Table 4.2, we still observe that the deep learning-based algorithm can produce a good approximation of $u(0,X_{0})$.
\begin{table}[H]
	\centering
		\caption{Estimate of $u(0,X_{0})$ where $X_{0}=0$, $d=100$.}
	\begin{tabularx}{22em}
		{|*{4}{>{\centering\arraybackslash}X|}}
		\hline
		& Averaged value & Standard deviation & Loss function \\\hline
		   0 & 1.96204     & 0.01881            & 0.50262 \\\hline
		 500 & 1.99250     & 0.00716            & 0.50266 \\\hline
		1000 & 1.99322     & 0.00683            & 0.50098\\\hline
		1500 & 1.99942     & 0.00602            & 0.50026\\\hline
		2000 & 2.00047     & 0.00714            & 0.49976 \\\hline
	\end{tabularx}

\end{table}
The relative $L^{1}$-approximation error of  $u(0,X_{0})$ and mean of the loss function are presented in Fig. 4.2 from which we observe that the relative $L^{1}$-approximation error of $u(0,X_{0})$ oscillates when the number of iteration steps becomes larger. We also see that the mean of the loss function decays as the number of iteration steps increase.
\begin{figure}[H]
	\centering
	\subfigure[Relative $L^{1}$-approximation error]{\includegraphics[scale=0.42]{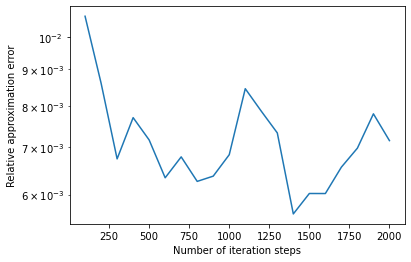} \label{3}}
	\quad
	\subfigure[Mean of the loss function]{\includegraphics[scale=0.41]{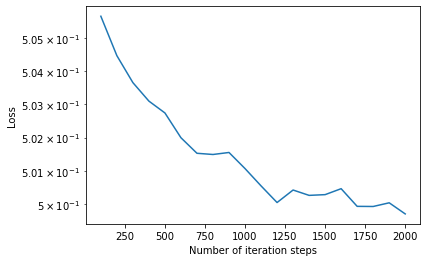} \label{4}}
		\caption{Relative  $L^{1}$-approximation error of  $u(0,X_{0})$ and mean of the loss function.}
	
\end{figure}

\section{Concluding remarks}
In this work, we popularized the deep BSDE schemes for high-dimensional forward-backward stochastic differential equations with jumps (FBSDEJs) and related high-dimensional parabolic integral-partial differential equations (PIDEs). We constructed the deep FBSDE scheme in which deep neural networks are used to approximate the gradient and the integral kernel. Then the error estimates for this deep FBSDE algorithm were obtained based on the optimal error estimates of Euler time discretization and deep learning error estimate which is bounded by the objective function in the variational problems. To establish the convergence relationship between the solutions of FBSDEJs and PIDEs, the Markovian iteration has been introduced and its convergence was established. We have implemented this deep FBSDE scheme for low and high dimensional FBSDEJs problems and numerical results showed that this scheme is effective. We also realized that our scheme cannot reach the accuracy of the classical numerical schemes which are not available for high dimensional problems. To improve the accuracy, extending our scheme to DBDP2 in which the loss function will be minimized on each time step will be our future work.

\end{document}